\documentclass[12pt]{article}
\RequirePackage[OT1]{fontenc}
\RequirePackage{amsthm,amsmath}
\usepackage{amssymb}
\usepackage{amsfonts}
\usepackage{mathrsfs, color}
\usepackage{url}
\usepackage{graphicx}

\RequirePackage[numbers]{natbib}
\RequirePackage[colorlinks,citecolor=blue,urlcolor=blue]{hyperref}

\theoremstyle{plain}
\newtheorem{thm}{Theorem}[section]
\theoremstyle{lemma}
\newtheorem{lem}[thm]{Lemma}
\newtheorem{rmk}[thm]{Remark}
\newtheorem{prop}[thm]{Proposition}
\newtheorem{cor}[thm]{Corollary}
\theoremstyle{definition}

\newtheorem{condition}{Condition}
\def\vX{ X}

\def\vx{\mathbf x}

\def\vg{\mathbf g}
\def\vY{Y}
\def\vW{W}
\def\vZ{Z}
\def\X{\mathbf X}
\def\cI{\mathcal I}
\def\cB{\mathcal B}

\def\vzero{0}

\def\vomg{{\boldsymbol{\omega}}}
\def\vxi{{\boldsymbol{\xi}}}
\def\vzeta{{\boldsymbol{\zeta}}}

\def\vXbar{\bar{X}}
\def\vYbar{\bar{Y}}
\def\P{\mathbb{P}}

\def\E{\mathbb{E}}
\def\Cov{\mathrm{cov}}
\def\Var{\mathrm{Var}}
\def\sumn{\sum_{i=1}^n}
\def\sumj{\sum_{j=1}^p}
\def\sumjk{\sum_{j,k=1}^p}
\newcommand{\sumne}[2]{\sum_{#1\ne #2}}
\def\tr{\mathrm{tr}}
\def\cum{\mathrm{cum}}
\def\I{\mathrm I}
\def\II{\mathrm{II}}
\def\III{\mathrm{III}}
\def\IV{\mathrm{IV}}
\def\V{\mathrm{V}}
\def\VI{\mathrm{VI}}
\def\VII{\mathrm{VII}}
\newcommand{\ind}[1]{\mathbb{I}\left\{#1\right\}}
\newcommand{\rbr}[1]{\left(#1\right)}
\newcommand{\rbR}[1]{\left[#1\right]}
\newcommand{\rBr}[1]{\left\{#1\right\}}
\newcommand{\rBR}[1]{\left|#1\right|}
\newcommand{\RBR}[1]{\left\Vert#1\right\Vert}
\newcommand{\cond}[1]{\left.#1\right|}
\newcommand{\rvs}[1]{{\rBR{#1}_2^2-f_1\over f}}

\def\sumstar{{\sum^\star}}

\def\g{g_{\psi,t}}

\numberwithin{equation}{section}

\newcounter{tictac}



\begin{document}

\begin{center}
{\large
    {\sc $L^2$ Asymptotics for High-Dimensional Data }
}
\bigskip

Mengyu Xu, Danna Zhang and Wei Biao Wu
\medskip
{\it

Department of Statistics\\
University of Chicago\\
5734 S. University Avenue\\
Chicago, Illinois 60637\\
USA\\
}
\end{center}
\centerline{November 24, 2014}

{\renewcommand\abstractname{Abstract}
\begin{abstract}
\baselineskip=18pt We develop an asymptotic theory for $L^2$ norms of sample mean vectors of high-dimensional data. An invariance principle for the $L^2$ norms is derived under conditions that involve a delicate interplay between the dimension $p$, the sample size $n$ and the moment condition. Under proper normalization, central and non-central limit theorems are obtained. To facilitate the related statistical inference, we propose a plug-in calibration method and a re-sampling procedure to approximate the distributions of the $L^2$ norms. Our results are applied to multiple tests and inference of covariance matrix structures.
\\
\\
{\em MSC Subject Classifications} (2010): 62G20, 62H15, 62G10.\\
{\em Key words and phrases:} $L^{2}$ asymptotics, Gaussian approximation, invariance principle, large $p$ small $n$, multiple testing.\\
\end{abstract}
\thispagestyle{empty}
\baselineskip=18pt

\section{Introduction}
\label{sec:introduction}

Let $\vX, \vX_i, i \in \mathbb{Z},$ be independent and identically distributed (i.i.d.) $p$-dimensional random vectors with mean $\E \vX_i = \mathbf{\mu}$ and covariance matrix $\Cov(\vX_i)=\Sigma$. Given the sample $\vX_1, \ldots, \vX_n$, we can estimate the mean $\mathbf{\mu}$ by the sample mean $\vXbar_n = \sumn\vX_i / n$. The primary goal of the paper concerns the asymptotic distribution of $|\vXbar_n - \mathbf{\mu}|^2 = (\vXbar_n - \mathbf{\mu})^T (\vXbar_n - \mathbf{\mu})$. The latter problem has a range of important applications in statistics including multiple tests and inference of covariance structures. Unless otherwise specified, assume throughout the paper that $\mathbf{\mu} = 0$.

In the classical setting with fixed dimension $p$, due to the Central Limit Theorem, we have $\sqrt n \vXbar_n \Rightarrow N(0, \Sigma)$. Hence, letting $\vY \sim N(0, \Sigma)$, we have by Slutsky's Theorem that 
\begin{eqnarray}\label{eq:A281020p}
\sup_{u \in \mathbb{R}}  |\P(n \vXbar_n^T\vXbar_n \le u) - \P( \vY^T\vY \le u)| \to 0.
\end{eqnarray}
In this paper we shall discuss the validity of (\ref{eq:A281020p}) in situations in which $p$ can be unbounded. In modern problems, the dimension $p$ can be larger than the sample size $n$. In this case, the traditional methods may not work. For example, \citet{MR863546} showed that the CLT is generally no longer valid when $p$ is large such that $\sqrt{n} = o(p)$. For other contributions see \citet{MR1965117, MR2144310}. Thus different methods are needed to prove (\ref{eq:A281020p}). The latter problem in the high dimensional setting and the corresponding statistical inference issues are challenging and have attracted wide attention. For linear processes, by \citet{MR1399305}, one can prove that $n \vXbar_n^T \vXbar_n - \tr(\X_n \X^T_n)/n$, where $\X_n = (\vX_1, \ldots, \vX_n)$ is the data matrix, is asymptotically Gaussian, assuming that $p/n$ tends to a finite constant and the largest eigenvalue of $\Sigma$ is negligible relative to its Frobenius norm. The latter condition can be violated in cases such as factor models, as discussed in \citet{MR3049913}, who studied the asymptotic distribution of $\vZ^T\vZ - \tr(\Sigma)$ over different types of $\Sigma$ under $\vZ \sim N(0, \Sigma)$.

In this paper, we shall develop an asymptotic theory for $\vXbar_n^T \vXbar_n$ for a generally distributed $\vX$, without requiring normality or linearity assumption. In particular, we shall apply the normal comparison method of Stein type and show that $\vXbar_n^T \vXbar_n$ can be approximated by a mixture of independent $\chi^2$ distributions. The approximate distribution may or may not be asymptotically normal. Specifically, we shall establish the following equivalent form of (\ref{eq:A281020p}):
\begin{eqnarray}\label{eq:A291140a}
\sup_{u \in \mathbb{R}}  |\P(n \vXbar_n^T\vXbar_n \le u) - \P(n \vYbar_n^T\vYbar_n \le u)| \to 0,
\end{eqnarray}
where $\vY_i, i \in \mathbb{Z},$ are i.i.d. $N(0, \Sigma)$ random vectors and $\vYbar_n = \sumn\vY_i / n$. We can view (\ref{eq:A291140a}) as an {\it invariance principle in a general sense} since the distributions of functions of non-Gaussian random vectors can be approximated by those of Gaussian vectors with the same covariance structure. The invariance principle in the narrow sense refers to the Gaussian approximation of partial sum processes of non-Gaussian random variables; cf \citet{MR3178474}.

As an immediate application of (\ref{eq:A281020p}) or (\ref{eq:A291140a}), one can perform the multiple test for the hypothesis 
\begin{eqnarray}\label{eq:A290726p}
H_0: \mu = \mu_0
\end{eqnarray}
for some pre-specified vector $\mu_0$. Assume without loss of generality that $\mu_0 = 0$.  A classical approach is to use the Hotelling $T^2$ statistic
\begin{eqnarray}\label{eq:A290611p}
T_n =  n \vXbar_n^T \hat \Sigma_n^{-1} \vXbar_n, 
\end{eqnarray}
where $\hat \Sigma_n = (n-1)^{-1} \sum_{i=1}^n (X_i - \bar X_n)  (X_i - \bar X_n)^T$ is the sample covariance matrix. In the high dimensional setting with $p > n$, $\hat \Sigma_n$ is singular and then $T_n$ is not well-defined. \citet{MR1399305} pointed out that this test lacks power. There is a large literature accommodating the Hotelling $T^2$ type statistic into the high-dimensional situation; see for example, \citet{MR0112207, MR0112208, MR1399305, MR2604697, MR2993891}, among others. \citet{MR0112207, MR0112208, MR2993891} considered Gaussian vectors. For the non-Gaussian random vectors, existing works assume linear forms. Central limit theorems for quadratic forms of sample mean vectors were proved in \citet{MR1399305,  MR2604697,  katayamanew}.

We test the hypothesis $H_0$ by directly using the test statistic $n \vXbar_n^T\vXbar_n$. Given the significance level $\alpha \in (0, 1)$, let $u_{1-\alpha}$ be the $(1-\alpha)$th quantile of $\vY^T\vY$. Namely $\P( \vY^T\vY \le u_{1-\alpha}) = 1-\alpha$. Then $H_0$ is rejected if $n \vXbar_n^T\vXbar_n > u_{1-\alpha}$. By (\ref{eq:A281020p}), the latter test has an asymptotic level $\alpha$.

If $\Sigma$ is known, the cutoff value $u_{1-\alpha}$ can be easily computed, either numerically or analytically, since the distribution of $\vY^T\vY$ is completely known. In most applications, however, $\Sigma$ is not known. We consider two approaches. The first one is to use an estimate of $\Sigma$. With the estimated covariance matrix, we can simulate a cutoff value. To access the goodness of the cutoff value with estimated covariance matrices, we shall introduce a new matrix convergence criterion: the {\it normalized consistency}. It is closely related, but different from the widely used spectral norm convergence. From modern random matrix theory, it is now well-known that the sample covariance matrix $\Sigma_n$ is not a (spectral norm) consistent estimator of $\Sigma$ when $p$ is large; see \citet{marvcenko1967distribution, MR2567175, MR0467894, MR0566592, MR950344, MR1863961, MR2485012}, to name a few. However, our results indicate that the sample covariance matrix can be normalized consistent in spectral norm, and hence the corresponding estimated cutoff value is  consistent. The normalized consistency guarantees the validity of resampling procedures.  Details are given in Section \ref{sec:O230147p}. As our second approach, we use the subsampling technique, which avoids estimating $\Sigma$ or its eigenvalues; see Section \ref{sec:O230149p}.

Another type of approach for testing (\ref{eq:A290726p}) is to use the maximum or $L^\infty$ norm $|\vXbar_n|_\infty = \max_{j \le p} |{\bar X}_{n j}|$ or the studentized version $\max_{j \le p} |{\bar X}_{n j}|/\hat \sigma_j$, where $\hat \sigma_j^2$ are estimates for the marginal variances $\sigma_j^2 = {\rm var}(X_{i j})$. \citet{MR2351093} considered the uniform consistency problem, and \citet{MR2372536} performed the $L^\infty$ test via Bonferroni correction, thus completely ignoring dependencies between entries of $X_i$. In a recent work, \citet{MR3161448} derived a Gaussian approximation for $|\vXbar_n|_\infty$ in the high-dimensional setting. In comparison with the marginal testing procedures, the procedure in \citet{MR3161448} is dependence-adjusted. \citet{MR3059419} established a deep {C}arm\'er-type moderate deviation principle for Hotelling's $T^2$ statistic under mild moment condition. The $L^2$-based test can be more powerful if the alternative consists of many small but non-zero signals that are of similar magnitudes.

This paper is organized as follows. In Section \ref{sec:gar}, we present the Gaussian approximation result. Section \ref{sec:est} provides a plug-in calibration of the Gaussian analogue when $\Sigma$ is unknown. We introduce {\it normalized consistency}, a new matrix convergence criterion. A sub-sampling procedure is also introduced there. In Section \ref{sec:cov_inf} we apply our result to the mean inference problem for linear processes. Section \ref{sec:cov} deals with the covariance matrix structure inference for linear processes. Proofs are given in Sections \ref{sec:proof}.

We now introduce some notation. For a vector $\vx = (x_1, \ldots, x_m)^T$, let the length $|\vx| = |\vx|_2 = (\vx^T\vx)^{1/2}$. Here $\vx^T\vx = \sum_{i=1}^m x_i^2$. Let $X$ be a random vector. Write $X\in {\cal L}^q$, $q > 0$, if $\RBR{X}_q := (\E |X|^q)^{1/q} < \infty$. For a matrix $A = (a_{jk})_{j,k}$, $\rho(A) = \max_\vx | A \vx| / |\vx|$ (resp. $\rBR{A}_F= (\sum_{jk}a_{jk}^2)^{1/2}$) denotes its spectral (resp. Frobenius) norm.  Write the $p \times p$ identity matrix as ${\rm Id}_p$. Denote by $C$ a positive constant whose value may vary from place to place.

\section{Main Result}
\label{sec:gar}
Consider i.i.d. random vectors $\vX, \vX_i\in\mathbb{R}^p$, $i \in \mathbb{Z}$, with $\E \vX_i = \vzero$ and covariance matrix $\Cov(\vX_i) = \Sigma$. Let $\Sigma = Q \Lambda Q^T$ be its eigen-decomposition, where $Q$ is an orthonormal matrix with $Q^T Q = {\rm Id}_p$ and $\Lambda = \mathrm{diag} \rbr{\lambda_1, \ldots, \lambda_p}$, with $\lambda_1 \ge \ldots \ge \lambda_p \ge 0$. Given data $X_1, \ldots, X_n$, let $\hat\Sigma = n^{-1} \X_n \X_n^T$, where $\X_n = (\vX_1, \ldots, \vX_n)$, be the sample covariance matrix; let  $\hat \lambda_1 \ge \ldots \ge \hat \lambda_p \ge 0$ be the eigenvalues of $\hat\Sigma$. Define 
\begin{eqnarray*}
f_k := [\tr(\Sigma^k)]^{1/k} \mbox{ and } \hat f_k:=[\tr(\hat \Sigma^k)]^{1/k}, \quad k=1,2,\ldots.
\end{eqnarray*}
Then $f_k^k = \sum_{i=1}^p \lambda_i^k$ and $\hat f_k^k = \sum_{i=1}^p \hat \lambda_i^k$. For the Frobenius norm with $k = 2$, we simply write $f = f_2$ and $\hat f = \hat f_2$.

Our main result is Theorem \ref{thm:true_sigma} which asserts that under suitable conditions the distributions of quadratic functions of $\bar X_n^T \bar X_n$ and  $\bar Y_n^T \bar Y_n$ are asymptotically close. In our asymptotic relation, we let $n \to \infty$ and view the dimension $p = p_n$ which satisfies $p_n \to \infty$ as $n \to \infty$. To state the theorem, we need to impose the following condition on $\vX$.

\begin{condition}
Let $\delta>0$. Assume that
\begin{align}\label{cond_1}
K_\delta(X) ^{2+\delta}:= \E\left|\rvs{\vX_1}\right|^{2+\delta}& < \infty;\\
\label{cond_2}
D_\delta(X) ^{2+\delta}:= \E\left| {{\vX_1^T \vX_2} \over f}\right|^{2+\delta}& < \infty.
\end{align}
\end{condition}

In conditions (\ref{cond_1}) and (\ref{cond_2}), $K_\delta(X)$ and $D_\delta(X)$ depend on the distribution of $X$. In the sequel for notational convenience we abbreviate them as $K_\delta$ and $D_\delta$, respectively. Note that $D_{0}=1$. In Sections \ref{sec:cov_inf} and \ref{sec:cov} we shall bound $K_\delta$ and $D_\delta$ for mean and covariance matrix inference problems arising from linear processes. Remark \ref{rmk:O231000p} provides an upper bound for moments of sums of dependent random variables using Rosenblatt transforms. Proposition \ref{prop:cond_Y} shows that for Gaussian vectors we can have explicit upper bounds.

\begin{prop}\label{prop:cond_Y}
Let $\vY_i$ be i.i.d. $N(\mathbf{0},\Sigma)$ and $\delta \ge 0$. Then
\begin{align}
\label{cond_1Y}
\E\left|\rvs{\vY_1}\right|^{2+\delta}&\le c_\delta^{2+\delta};\\
\label{cond_2Y}
\E\left|{ \vY_1^T \vY_2 \over f}\right|^{2+\delta}&\le d_\delta^{2+\delta},
\end{align}
where $c_\delta=(1+\delta)^{1/2} \|\xi^2-1\|_{2+\delta}$, $d_\delta=(1+\delta)^{1/2} \|\xi\|^2_{2+\delta}$ and $\xi \sim N(0, 1)$.
 \end{prop}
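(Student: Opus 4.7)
The plan is to reduce both inequalities to a single sharp $L^q$ moment bound for sums of independent centered random variables, applied to the spectral expansion of $\Sigma$. Writing $\Sigma = Q\Lambda Q^T$ and $\vY_i = Q \Lambda^{1/2} \vZ_i$ with $\vZ_i = (Z_{i,1},\ldots,Z_{i,p})^T \sim N(\vzero,\mathrm{Id}_p)$ i.i.d., one obtains
\[
|\vY_1|^2 - f_1 \;=\; \sum_{j=1}^p \lambda_j (Z_{1,j}^2 - 1), \qquad
\vY_1^T \vY_2 \;=\; \sum_{j=1}^p \lambda_j Z_{1,j} Z_{2,j},
\]
where in both cases the summands indexed by $j$ are independent with mean zero.

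The key analytic input I would invoke is the inequality: for independent mean-zero $\eta_1,\ldots,\eta_p \in L^q$ with $q \ge 2$,
\[
\Bigl\|\sum_{j=1}^p \eta_j\Bigr\|_q^{2} \;\le\; (q-1) \sum_{j=1}^p \|\eta_j\|_q^{2}.
\]
This sharp bound follows from the $2$-uniform smoothness of $L^q$ with optimal constant $\sqrt{q-1}$ (Ball, Carlen and Lieb; Pisier). In the form $\|F+G\|_q^2 \le \|F\|_q^2 + 2\langle G,F\rangle_q + (q-1)\|G\|_q^2$, one applies it inductively with $F = \eta_1 + \cdots + \eta_{j-1}$ and $G = \eta_j$; the cross term $\langle \eta_j, F\rangle_q$ vanishes because $\eta_j$ is centered and independent of $F$, leaving a clean recursion that iterates to the displayed inequality.

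Applying this with $q = 2+\delta$ (so $q-1 = 1+\delta$) to each expansion then finishes the proof. For \eqref{cond_1Y}, take $\eta_j = \lambda_j(Z_{1,j}^2-1)$: then $\|\eta_j\|_{2+\delta} = \lambda_j \|\xi^2-1\|_{2+\delta}$, so $\sum_j \|\eta_j\|_{2+\delta}^2 = f^2 \|\xi^2-1\|_{2+\delta}^2$, and the resulting bound divided by $f^{2+\delta}$ yields $c_\delta^{2+\delta}$. For \eqref{cond_2Y}, take $\eta_j = \lambda_j Z_{1,j} Z_{2,j}$: by independence of $Z_{1,j}$ and $Z_{2,j}$, $\|\eta_j\|_{2+\delta} = \lambda_j \|\xi\|_{2+\delta}^2$, so $\sum_j \|\eta_j\|_{2+\delta}^2 = f^2 \|\xi\|_{2+\delta}^4$, yielding $d_\delta^{2+\delta}$ after division by $f^{2+\delta}$.

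The main obstacle is locating or proving the sharp $L^q$ bound with the optimal $(q-1)$ constant; once it is in hand, both displays drop out by direct substitution using the eigenstructure of $\Sigma$. Cruder substitutes produce strictly worse prefactors: Gaussian hypercontractivity on these second-order chaoses gives estimates of the form $\|\cdot\|_q \le (q-1)\|\cdot\|_2$, and Burkholder's square-function inequality yields a $(q-1)$ rather than $(q-1)^{1/2}$ constant; neither recovers the claimed $(1+\delta)^{1/2}$ prefactor without invoking the $2$-smoothness of $L^q$.
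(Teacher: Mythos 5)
Your argument is correct and is essentially the paper's own proof: the paper likewise diagonalizes $\Sigma$ to write $|\vY_1|^2-f_1=\sum_j\lambda_j(\xi_j^2-1)$ and $\vY_1^T\vY_2=\sum_j\lambda_j\xi_j\zeta_j$ and then applies the moment bound $\bigl\|\sum_j\eta_j\bigr\|_q^2\le(q-1)\sum_j\|\eta_j\|_q^2$ to these sums of independent centered terms. The only difference is attribution: the sharp $(q-1)$ inequality you derive from the $2$-uniform smoothness of $L^q$ is exactly what the paper invokes under the name ``Burkholder's inequality'' (in the Rio-type form with constant $q-1$ on the squared norm), so the two proofs coincide.
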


Based on (\ref{cond_1}) and (\ref{cond_2}), we have the following asymptotic result. Let $\eta_i, i \in \mathbb{Z},$ be i.i.d. $\chi^2_1$ random variables. Consider the normalized version
\begin{eqnarray}
\label{eq:M180141p}
R_n = { {n |\bar X_n|_2^2 - f_1} \over f} . 
\end{eqnarray}

\begin{thm}\label{thm:true_sigma}
Assume that (\ref{cond_1}) and (\ref{cond_2}) hold with $0 < \delta \le 1$. Then 
\begin{align}\label{eq:A291103p}
\sup_t\left|\P\left( R_n \le t\right)
      -\P\left( V \le t \right)\right|= O(\psi_n^{-1/2}),
      \mbox{ where } V = \sum_{j=1}^p { {\lambda_j} \over f} (\eta_j-1).
\end{align}
Here $\psi_n$ is the solution to the equation $L_\delta(n, \psi) = \psi^{-1/2}$ with
\begin{eqnarray*}
 L_\delta(n,\psi) = \psi^2 ( {\tilde K_{0}^2 \over n} + {\tilde K_{0} \over n^{1/2}})
  + \psi^q [   {\tilde K_{\delta}^q \over n^{q-1}}
  +  { \E (X_1^T \Sigma X_1)^{q/2} \over { n^{\delta/2} f^q} }
  + {\tilde D_\delta^q \over n^{\delta}}],
\end{eqnarray*}
where $q = 2+\delta$,  $\tilde K_{\delta} = K_{\delta} + c_{\delta}$, $\tilde D_{\delta} = D_{\delta} + d_{\delta}$, and $c_\delta$ and $d_\delta$ are given in Proposition \ref{prop:cond_Y}.  In particular, we have $\psi_n \to \infty$ if
\begin{eqnarray}\label{eq:O230925p}
{\tilde K_{0}^2 \over n}  
  +    {\tilde K_{\delta}^q \over n^{q-1}}
  +  { \E (X_1^T \Sigma X_1)^{q/2} \over { n^{\delta/2} f^q} }
  + {\tilde D_\delta^q \over n^{\delta}} \to 0 
   \mbox{ as }  n \to \infty.
\end{eqnarray}
Consequently the left hand side of (\ref{eq:A291103p}) converges to $0$.
\end{thm}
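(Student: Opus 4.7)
Since $V$ has the same law as $(n|\bar{Y}_n|_2^2 - f_1)/f$ for $\bar{Y}_n = n^{-1}\sum_{i=1}^n Y_i$ with $Y_i$ i.i.d.\ $N(0,\Sigma)$, Theorem~\ref{thm:true_sigma} is a quantitative invariance principle for the centred normalised quadratic form $n|\bar{X}_n|_2^2 - f_1$. My plan follows the classical two-step recipe: (a) a Lindeberg--Stein swap bounding $|\E g(R_n)-\E g(V)|$ for suitably smooth $g$, and (b) an Esseen-type smoothing inequality combined with anti-concentration of $V$ to pass to Kolmogorov distance.

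\textbf{Lindeberg swap.} The identity
\[
n|\bar{X}_n|_2^2 - f_1 = \frac{1}{n}\sum_{i=1}^n \bigl(|X_i|_2^2 - f_1\bigr) + \frac{1}{n}\sum_{i\ne j} X_i^T X_j
\]
displays $R_n$ as a centred quadratic polynomial in $X_1,\ldots,X_n$. Fix a smoothing parameter $\psi>0$ and take a $C^3$ mollifier $g_{\psi,t}$ of $\ind{\cdot\le t}$ whose $k$-th derivative is bounded by $C\psi^k$ for $k\le 3$. Let $R_n^{(i)}$ denote $R_n$ after replacing $X_1,\ldots,X_i$ by Gaussian copies $Y_1,\ldots,Y_i$, so that $R_n^{(0)}=R_n$ and $R_n^{(n)}$ has the law of $V$. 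One telescopes over $i$ and Taylor-expands each summand to third order about the value obtained by dropping the $i$-th vector; the zeroth-, first- and expected second-order contributions cancel because $\E X_i=\E Y_i=0$ and $\E X_iX_i^T=\E Y_iY_i^T=\Sigma$. What remains is a second-derivative matching term controlled in $L^2$, plus a third-derivative Lindeberg remainder truncated at the $(2+\delta)$-th moment.

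\textbf{Moment bounds on the remainders.} The second-derivative piece is driven by the $L^2$-norm of the diagonal contribution $(|X_i|_2^2-f_1)/f$ and produces the $\psi^2(\tilde K_0^2/n+\tilde K_0/n^{1/2})$ part of $L_\delta(n,\psi)$ via (\ref{cond_1}) with $\delta=0$ together with Proposition~\ref{prop:cond_Y}. The third-derivative remainder decomposes into three kinds of cross moments, one for each $\psi^q$ piece ($q=2+\delta$): (i) the pure self-term, controlled by (\ref{cond_1}) and yielding $\tilde K_\delta^q/n^{q-1}$; (ii) the mixed linear-in-$X_i$ piece $n^{-1}X_i^T\sum_j(X_j\text{ or }Y_j)/f$, whose conditional $q$-th moment given $X_i$ is of order $n^{q/2}(X_i^T\Sigma X_i)^{q/2}/f^q$ by a Marcinkiewicz--Zygmund-type bound, so that taking expectations and summing over $i$ gives $\E(X_1^T\Sigma X_1)^{q/2}/(n^{\delta/2}f^q)$; and (iii) the off-diagonal piece involving products $X_i^T X_j$ or $X_i^T Y_j$, controlled by (\ref{cond_2}) and Proposition~\ref{prop:cond_Y} to give $\tilde D_\delta^q/n^\delta$. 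The Gaussian constants $c_\delta,d_\delta$ appear because every intermediate $R_n^{(i)}$ already contains the Gaussian copies $Y_1,\ldots,Y_i$; each bound must hold uniformly over $X$- and $Y$-versions, whence $\tilde K_\delta=K_\delta+c_\delta$ and $\tilde D_\delta=D_\delta+d_\delta$. Assembling the pieces yields $\sup_t|\E g_{\psi,t}(R_n)-\E g_{\psi,t}(V)|\le C L_\delta(n,\psi)$.

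\textbf{Smoothing, balancing, main obstacle.} The Esseen-type smoothing inequality
\[
\sup_t|\P(R_n\le t)-\P(V\le t)| \le C\sup_t|\E g_{\psi,t}(R_n)-\E g_{\psi,t}(V)| + C\sup_t\P(|V-t|\le \psi^{-1})
\]
reduces the remaining task to anti-concentration of $V$. Since $V$ is a shifted linear combination of independent $\chi^2_1$ variables, its density may blow up when some $\lambda_j/f$ is close to $1$; still, the worst-case bound $\P(|V-t|\le\epsilon)\le C\epsilon^{1/2}$ holds, coming from the integrable $u^{-1/2}$ singularity in the $\chi^2_1$ density. Choosing $\psi=\psi_n$ so that $L_\delta(n,\psi_n)=\psi_n^{-1/2}$ balances the two contributions and produces the claimed rate $O(\psi_n^{-1/2})$; condition (\ref{eq:O230925p}) is exactly what forces $\psi_n\to\infty$, giving the convergence conclusion. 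The principal technical hurdle is the cross-term bookkeeping in the swap: because each $R_n^{(i)}$ is quadratic in a mixture of $X$- and $Y$-vectors, a single third-derivative remainder spawns several different cross moments at once, and each must be routed through the correct moment condition (\ref{cond_1}), (\ref{cond_2}), or its Gaussian counterpart in Proposition~\ref{prop:cond_Y}, with the right powers of $n$ and $\psi$, to land cleanly on the three distinct $\psi^q$ pieces of $L_\delta(n,\psi)$.
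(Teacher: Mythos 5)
Your proposal is correct and follows essentially the same route as the paper: a Lindeberg swap with a $C^3$ mollifier of the indicator, a third-order Taylor expansion whose residual second-order piece yields the $\psi^2$ terms and whose third-order remainder splits (via Rosenthal-type bounds) into exactly the three $\psi^q$ pieces of $L_\delta(n,\psi)$, followed by the $h^{1/2}$ anti-concentration bound for the $\chi^2_1$ mixture and the balancing choice $L_\delta(n,\psi_n)=\psi_n^{-1/2}$. The paper's Lemma \ref{lem:g_approx} and Lemma \ref{lem:density} carry out precisely the two steps you outline, so no substantive difference remains.
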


Note that $\E(\eta_i-1)^2 = 2$. By Lindeberg's Central Limit Theorem, 
\begin{eqnarray*}
V =  \sum_{j=1}^p f^{-1} \lambda_j (\eta_j-1)  \Rightarrow N(0, 2)
\end{eqnarray*}
holds if and only if $\lambda_1 / f = \rho(\Sigma) / f \to 0$. In this case by Theorem \ref{thm:true_sigma}, $R_n$ is also asymptotically $N(0, 2)$. In the previous literature, the primary focus is on the asymptotic normality of $\bar X_n^T \bar X_n$ or its modified version; see for example \citet{MR1399305, MR2483435, MR2604697}. As an exception, \citet{MR3049913} considered situations in which the CLT fails. If $\lambda_1 / f$ does not converge to $0$, $R_n$ may not have a Gaussian limit. When the dependence between entries of $\vX$ is strong, the asymptotic distribution of $R_{n}$ can be non-normal. For example, suppose $Y \sim N(0, \Sigma)$ and $\Sigma$ is Toeplitz with diagonal $1$ and $\sigma_{j,k}\sim |k-j|^{-D}$ for some $0<D<1/2$ as $|k-j| \to \infty$. Then $(Y^T Y - f_1)/f \Rightarrow \sum_{j=1}^\infty c_j (\eta_j - 1)$, the Rosenblatt distribution, with $c_j \sim c j^{D-1}$ as $j \to \infty$, and $c$ is a constant; see \citet{MR3079303}.

\begin{rmk}
{\rm
Since $X_1^T \Sigma X_1 = \E( X_1^T X_2 X_2^T X_1 | X_1)$, by Jensen's inequality, 
\begin{eqnarray}
\label{eq:O311227p}
\E (X_1^T \Sigma X_1)^{q/2} \le \E( |X_1^T X_2 X_2^T X_1|^{q/2}) 
 =  \E( |X_1^T X_2|^{q}) = D_\delta^q f^q. 
\end{eqnarray}
So (\ref{eq:O230925p}) follows from $\tilde K_{0}^2 / n + {\tilde K_{\delta}^q / n^{q-1}} + {\tilde D_\delta^q / n^{\delta/2}} \to 0 $ as $ n \to \infty$. Namely if $n$ is sufficiently large such that $\tilde K_0^2 +  \tilde K_{\delta}^{q / (q-1)} + \tilde D_\delta^{2 q / \delta} = o(n)$, then the left hand side of (\ref{eq:A291103p}) holds with rate $\psi_n^{-1/2} \to 0$. \qed
}
\end{rmk}

\begin{rmk}
{\rm
If Conditions (\ref{cond_1}) and (\ref{cond_2}) hold with $K_\delta$ and $D_\delta$ bounded, then we can choose $\psi_n \asymp n^{\delta/(5+2\delta)}$ and  the corresponding convergence rate in (\ref{eq:A291103p}) is  $O(n^{-\delta/(10+4\delta)})$.
\qed
}
\end{rmk}

\begin{rmk}
\label{rmk:O231000p}
{\rm Using the Rosenblatt transform (\cite{MR0049525}), we can find measurable functions $G_1, \ldots, G_p$ and i.i.d. standard uniform random variables $U_1, \ldots, U_p$ such that $X_1$ and the random vector $(G_1({\cal U}_1), \ldots, G_p({\cal U}_p))^T$ are identically distributed. Here ${\cal U}_j = (U_1, \ldots, U_j)$. Following Wu \cite{MR2172215}, define the predictive dependence measure $\theta_{i,j, q} = \| {\cal P}_i G^2_j({\cal U}_j) \|_q$, where $ {\cal P}_i \cdot = \E(\cdot | {\cal U}_i) -  \E(\cdot | {\cal U}_{i-1})$ is the projection operator. Since $X_1^T X_1 - f_1 = \sum_{i=1}^p {\cal P}_i X_1^T X_1 =  \sum_{i=1}^p \sum_{j=i}^p {\cal P}_i G^2_j({\cal U}_j)$, we have by Burkholder's inequality (p. 396 in \cite{MR1476912}) that  
\begin{eqnarray*}
 {{\|X_1^T X_1 - f_1 \|_q^2} \over {q-1}} \le \sum_{i=1}^p \|{\cal P}_i X_1^T X_1\|_q^2
  \le \sum_{i=1}^p \left( \sum_{j=i}^p \theta_{i,j, q} \right)^2.
\end{eqnarray*}
A similar upper bound also holds for the $L^q$ norm $\|X_1^T X_2 \|_q$.
\qed
}
\end{rmk}

To estimate the quantity $|\mu|_2^2 = \mu^T \mu$ based on i.i.d. vectors $X_1, \ldots, X_n$ with $\E X_i = \mu$, besides the natural plug-in estimator $\vXbar_n^T \vXbar_n$, we can also use the unbiased estimator $(n (n-1))^{-1} \sum_{i \not= j \le n} \vX_{i}^{T}\vX_{j}$; see also \citet{MR2604697}. This leads to the following variant of  (\ref{eq:M180141p}):
\begin{align}\label{eq:M220648p}
\tilde R_{n}=\frac{\sum_{i\ne j \le n}\vX_{i}^{T}\vX_{j}}{(n-1)f}
\end{align}
Using the arguments in the proof of Theorem \ref{thm:true_sigma}, without essential extra difficulties, we have the Gaussian approximation result:

\begin{cor} 
\label{cor:O311239p}
Assume Condition (\ref{cond_2}) and $\mu = 0$. Further assume  
\begin{eqnarray}
\label{eq:O231017p}
 L^\dagger_\delta :=  { \E (X_1^T \Sigma X_1)^{q/2} \over { n^{\delta/2} f^q} }
  + {\tilde D_\delta^q \over n^{\delta}} \to 0.
\end{eqnarray}
Then $\psi_n := (L^\dagger_\delta)^{-1/(q+1/2)} \to \infty$ and, recall $V =  \sum_{j=1}^p f^{-1} \lambda_j (\eta_j-1)$,
\begin{align}\label{eq:M220656p}
\sup_t |\P(\tilde R_n \le t)  - \P\left(V \le t \right) |
 = O(\psi_n^{-1/2}) \to 0.
\end{align}
\end{cor}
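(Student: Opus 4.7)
The plan is to follow the strategy used for Theorem~\ref{thm:true_sigma}, exploiting the fact that $\tilde R_n$ is a pure degenerate U-statistic built only from the off-diagonal cross-products $\vX_i^T \vX_j$ with $i\ne j$; the ``diagonal'' contribution $\sum_i(\vX_i^T\vX_i - f_1)$ simply does not appear. Condition~(\ref{cond_1}) together with every term involving $\tilde K_\delta$ in $L_\delta(n,\psi)$ is precisely what controls this diagonal contribution in the proof of Theorem~\ref{thm:true_sigma}; dropping those pieces yields the simpler $L^\dagger_\delta$ alongside the unchanged smoothing term $\psi^{-1/2}$.

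First, I would rewrite $\tilde R_n = \frac{2}{(n-1)f}\sum_{i<j} \vX_i^T \vX_j$ and carry out the same Stein-type normal-comparison argument as for Theorem~\ref{thm:true_sigma}: approximate the indicator $\mathbb{I}\{\tilde R_n \le t\}$ by a smooth test function at scale $\psi^{-1/2}$ and swap the $\vX_i$'s for i.i.d.\ $N(\vzero,\Sigma)$ copies $\vY_i$ one at a time in Lindeberg fashion. Each remainder involves third-order expansions of the pair interactions $\vX_i^T \vX_j$ and the conditional moment $\vX_1^T \Sigma \vX_1 = \E(\vX_1^T \vX_2 \vX_2^T \vX_1 \mid \vX_1)$; the quadratic piece $\RBR{\vX_1^T\vX_1 - f_1}_q$ that generates the $\tilde K_\delta$-terms in Theorem~\ref{thm:true_sigma} never arises because $\vX_1^T \vX_1$ is not present in $\tilde R_n$. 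Collecting the smoothing error $\psi^{-1/2}$ with a moment error of the form $\psi^q L^\dagger_\delta$ and balancing them yields $\psi_n=(L^\dagger_\delta)^{-1/(q+1/2)}$ and the rate $O(\psi_n^{-1/2})$.

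To confirm that the Gaussian analog is indeed $V$, I would use the eigen-decomposition $\vY_i = Q\Lambda^{1/2} Z_i$ with $Z_i \sim N(\vzero,\mathrm{Id}_p)$ to obtain
\begin{align*}
\tilde R_n^Y := \frac{\sum_{i\ne j}\vY_i^T \vY_j}{(n-1)f} = \sum_{k=1}^p \frac{\lambda_k}{f}\left(\eta_k - \frac{R_k}{n-1}\right),
\end{align*}
where $\eta_k := n\bar Z_k^2 \sim \chi^2_1$ and $R_k := \sum_{i=1}^n(Z_{ik}-\bar Z_k)^2 \sim \chi^2_{n-1}$ are independent (Fisher's theorem) and the pairs $(\eta_k,R_k)$ are mutually independent across $k$. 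This identifies $\tilde R_n^Y$ with $V - A_n$ where $V\perp A_n$ and $\Var(A_n) = 2/(n-1)$, so the Kolmogorov distance between $\tilde R_n^Y$ and $V$ is controlled at scale $O(n^{-1/2})$ through the same anti-concentration estimate for $V$ that Theorem~\ref{thm:true_sigma} uses internally. Since $L^\dagger_\delta \ge \tilde D_\delta^q/n^\delta$ forces $\psi_n^{-1/2} \gg n^{-1/2}$ for $0<\delta\le 1$, this extra error is absorbed into $O(\psi_n^{-1/2})$.

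The main technical obstacle is the bookkeeping inside the Lindeberg swap: one has to verify that, when the expansion is applied to the degenerate U-statistic $\tilde R_n$, every term that would naturally contribute a $\tilde K_\delta$-factor in fact has vanishing coefficient, so that only $\tilde D_\delta$- and $\E(\vX_1^T \Sigma \vX_1)^{q/2}$-contributions survive in $L^\dagger_\delta$. Once that accounting is done, the optimization of $\psi$ and the divergence $\psi_n\to\infty$ under (\ref{eq:O231017p}) go through exactly as in the proof of Theorem~\ref{thm:true_sigma}.
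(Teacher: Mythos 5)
Your proposal is correct and follows essentially the same route the paper takes: the paper gives no separate proof of Corollary \ref{cor:O311239p}, asserting only that it follows from the arguments for Theorem \ref{thm:true_sigma}, and your account of why the $\tilde K_\delta$-terms drop out (the diagonal $\vX_i^T\vX_i$ cancels in each Lindeberg increment, so $\Delta_i-\Gamma_i = 2H_i^T(\vX_i-\vY_i)/((n-1)f)$ and both $\E\,\I$ and $\E\,\II$ vanish) is exactly the intended bookkeeping, with your treatment of the Gaussian analogue $\tilde R_n^Y = V - A_n$ being a genuine detail the paper glosses over. One minor overstatement: Chebyshev plus the $h^{1/2}$ anti-concentration of Lemma \ref{lem:density} gives the $\tilde R_n^Y$-to-$V$ error at scale $O(n^{-1/5})$ rather than $O(n^{-1/2})$, but since $\psi_n^{-1/2}\gtrsim n^{-\delta/(5+2\delta)}\ge n^{-1/5}$ for $0<\delta\le 1$, the absorption into $O(\psi_n^{-1/2})$ still goes through.
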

By (\ref{eq:O311227p}), a simple sufficient condition for (\ref{eq:O231017p}) is $D_\delta^q = o(n^{\delta / 2})$. Then the rate in (\ref{eq:M220656p}) becomes $D_{\delta}^{q/(5+2\delta)} n^{-\delta/(10+4\delta)}$. Notice that in Corollary \ref{cor:O311239p} Condition (\ref{cond_1}) is not needed since $\tilde R_{n}$ does not involve the diagonal terms $\vX_i^{T} \vX_i$. Consequently the weaker moment condition $X_i \in {\cal L}^{2+\delta}$ suffices. In comparison,  (\ref{cond_1}) necessarily requires the stronger moment condition $X_i \in {\cal L}^{4 + 2\delta}$.  For linear processes, applying the results in \citet{MR1399305}, one can have a CLT for $\tilde R_{n}$ by assuming the existence of $4$th moments, $p/n$ tends to a finite constant and $\rho(\Sigma) / f \to 0$. Since $\rho(\Sigma)^4 \le f_4^4 \le \rho(\Sigma)^2 f^2$, the latter condition is equivalent to $f_4^4/f^4 = o(1)$, which is also imposed in \cite{MR2604697, MR2724863}. In comparison, by (\ref{linear_cond_2}) of Theorem \ref{verify_cond}, it suffices to impose a weaker $(2+\delta)$th moment condition, and our result (\ref{eq:M220656p}) can allow non-Gaussian limiting distributions.

\begin{rmk}
{\rm 
In general the condition $L^\dagger_\delta \to 0$ in (\ref{eq:O231017p}) is not relaxable for the following result
\begin{eqnarray}
 \label{eq:O231021p}
\sup_t |\P(\tilde R_n \le t) - \P\left( V \le t \right) | \to 0.
\end{eqnarray}
Let $\ell = p^\beta$, $\beta > 1/2$, and let $B_{i j}, i, j \in \mathbb Z,$ be i.i.d. Bernoulli($\ell^{-1}$) random variables; let $X_{i j} = (\ell B_{i j}-1) (\ell-1)^{-1/2}$. Then $\E X_{i j} = 0$, $\E X_{i j}^2 = 1$, $\E |X_{i j}|^q \sim \ell^{q/2-1}$, $\Sigma = {\rm Id}_p$ and $f^2 = p$. By Burkholder's inequality, $\E |X_1^T X_1|^{q/2} \le c_q \E| \sum_{j=1}^p X_{1 j}|^q$. By Rosenthal's inequality (\cite{MR0271721}), $\E| \sum_{j=1}^p X_{1 j}|^q \le c_q 
( p \E |X_{11} |^q + p^{q/2})$ and $\E |X_1^T X_2|^q \le c_q( p \E |X_{11} X_{21}|^q + p^{q/2})$. Then (\ref{eq:O231017p}) requires that 
\begin{eqnarray}\label{eq:O241111p}
\ell = o(n p^{1/2}), \mbox{ or } p^{\beta - 1/2} = o(n).
\end{eqnarray}
We remark that Condition (\ref{eq:O241111p}) is also necessary for (\ref{eq:O231021p}). By (\ref{eq:O231021p}),
\begin{eqnarray}\label{eq:O241015p}
{ {(n-1) f \tilde R_n} \over { n p^{1/2} } }
 =
{ {\sum_{l=1}^p Q_l} \over { n p^{1/2} } }   \Rightarrow N(0, 2), \mbox{ where }
 Q_l = \sum_{i \not= j \le n} X_{i l} X_{j l}.
\end{eqnarray}
By the Linderberg-Feller central limit theorem, (\ref{eq:O241015p}) holds if and only if
\begin{eqnarray}\label{eq:O241018p}
p \E\{ [Q_1/(n p^{1/2})]^2 {\bf 1}_{|Q_1| \ge \theta n p^{1/2}}\}
 = \E\{  n^{-2} Q^2_1 {\bf 1}_{|Q_1| \ge \theta n p^{1/2}}\}
 \to 0 
\end{eqnarray}
holds for every $\theta > 0$. Note that $W := \sum_{i=1}^n B_{i 1}$ is binomial($n, \ell^{-1}$). If
\begin{eqnarray}\label{eq:O241026p}
n p^{1/2} = O(\ell),
\end{eqnarray}
then for all large $n$, the event $\{ |Q_1| < \theta n p^{1/2} \}$ implies $\{ W \le 1 \}$, and
\begin{eqnarray}\label{eq:O241030p}
\E\{  n^{-2} Q^2_1 {\bf 1}_{|Q_1| < \theta n p^{1/2}}\}
\le \E\{  n^{-2} Q^2_1 {\bf 1}_{W \le 1}\}\le { n^2 \over \ell^2 } + { n \over \ell } \to 0,
\end{eqnarray}
by noting that $\E\{  n^{-2} Q^2_1 {\bf 1}_{W = 0} \} \le n^2 \ell^{-2}$ and $\E\{  n^{-2} Q^2_1 {\bf 1}_{W = 1} \} \le n \ell^{-1}$.  Clearly (\ref{eq:O241030p}) violates (\ref{eq:O241018p}) since $n^{-2} \E Q^2_1 \to 2$. 
\qed
}
\end{rmk}

\begin{rmk}
{\rm
A careful check of the proof of Theorem \ref{thm:true_sigma} indicates that the result therein still holds for independent, but not identically distributed random vectors $X_i$ with mean $0$, (same) covariance matrix $\Sigma$: we need to replace the quantities $K_\delta$, $D_\delta$ and $\E (X_1^T \Sigma X_1)^{q/2}$ therein by $K_{\delta, n} := \max_{i \le n} \| X_i^T X_i - f_1\|_q / f$, $D_{\delta, n} := \max_{i < l \le n} \| X_i^T X_l \|_q / f$ and $\max_{i \le n} \E (X_i^T \Sigma X_i)^{q/2}$, respectively.
\qed
}
\end{rmk}

\section{Re-sampling Calibration Procedures}
\label{sec:est}
To test the hypothesis $H_0: \mu = 0$ (say) at level $\alpha \in (0, 1)$ using Theorem \ref{thm:true_sigma}, we need to compute the $(1-\alpha)$th quantile of the approximate distribution 
\begin{eqnarray}\label{eq:M180343p}
V = \sum_{j=1}^p f^{-1} \lambda_j (\eta_j-1). 
\end{eqnarray}
In practice, however, $\Sigma$ and hence $\lambda_j$ are not known. Section \ref{sec:O230147p} proposes an approach based on estimated $\lambda_j$. An alternative subsampling approach is given in Section \ref{sec:O230149p} which avoids estimating eigenvalues.

\subsection{A Plug-in Procedure and Normalized Consistency}
\label{sec:O230147p}

As a natural way to approximate the distribution of $V$, one can replace $\lambda_j$'s in (\ref{eq:M180343p}) by their estimates. Let $\tilde \Sigma$ be an estimate of $\Sigma$ based on the data $\X_{n} = (\vX_1, \ldots, \vX_n)$; let $\tilde \lambda_1 \ge \ldots \ge \tilde \lambda_p \ge 0$ be the eigenvalues of $\tilde \Sigma$ and $\tilde f = (\sum_{j=1}^p \tilde \lambda_j^2)^{1/2}$. Let $\tilde V = \sum_{j=1}^p \tilde f^{-1} \tilde \lambda_j (\tilde \eta_j-1)$, where $\tilde \eta_j$ are i.i.d. $\chi^2_1$ random variables that are independent of $\X_{n}$. By Lemma \ref{lem:com_mix_chisq}, if
\begin{eqnarray}
\label{eq:M01959p}
 \max_{j \le p}  | f^{-1} \lambda_j  - \tilde f^{-1} \tilde \lambda_j| \to 0 \mbox{ in probability},
\end{eqnarray}
then with probability converging to $1$, we have
\begin{eqnarray}
\label{eq:M011001p}
 \sup_t  |\P(V \le t) - \P^*(\tilde V \le t)| \to 0, 
\end{eqnarray}
where $\P^*$ is the conditional probability given ${\bf X}_{n}$. With (\ref{eq:M011001p}), the distribution of $V$ can be approximated by that of $\tilde V$ via extensive simulations.

\begin{lem}\label{lem:com_mix_chisq}
Let $a_{p,1} \ge a_{p,2} \ge \ldots \ge a_{p,p} \ge 0$ and $b_{p,1} \ge b_{p,2} \ge \ldots \ge b_{p,p} \ge 0$ be two sequences of real numbers satisfying $\sumj a_{p,j}^2=\sumj b_{p,j}^2=1$. Assume $\max_{j \le p} \rBR{a_{p,j}-b_{p,j}} \to 0$. Let $\eta_j$ be i.i.d. $\chi^2_1$ random variables and $\eta_j' = \eta_j -1$. Let $V_a = \sumj a_{p,j} \eta'_j$ and $V_b = \sumj b_{p,j} \eta'_j$. Then  
\begin{align}\label{com_mix_chisq}
\sup_x\big|{\P\left( V_a \le x \right)-\P\left(V_b \le x \right)}\big| = o(1).
\end{align}
\end{lem}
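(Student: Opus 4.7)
The plan is to match cumulants of $V_a$ and $V_b$ and then invoke a tightness/compactness argument to upgrade this to Kolmogorov convergence. Since $\eta_j' := \eta_j - 1$ are i.i.d.\ centered $\chi^2_1$ variables with $k$-th cumulant $2^{k-1}(k-1)!$ for $k\ge 2$, we have $\kappa_k(V_a) = 2^{k-1}(k-1)!\sumj a_{p,j}^k$ and analogously for $V_b$. For $k=2$ these cumulants coincide by the normalization $\sumj a_{p,j}^2 = \sumj b_{p,j}^2 = 1$. For $k\ge 3$, combining the elementary inequality $|a^k - b^k|\le k|a-b|(a^{k-1}+b^{k-1})$ with $a_{p,j},b_{p,j}\in[0,1]$ (so that $\sumj a_{p,j}^{k-1}\le\sumj a_{p,j}^2=1$ when $k\ge 3$) yields
\begin{align*}
\Big|\sumj (a_{p,j}^k-b_{p,j}^k)\Big|\le 2k\max_j |a_{p,j}-b_{p,j}| = o(1).
\end{align*}
Hence every cumulant, and consequently every moment, of $V_a$ and $V_b$ agree up to $o(1)$, with bounds uniform in $p$.

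I then argue by contradiction. Suppose the conclusion fails; then along a subsequence there exist $x_p$ with $|\P(V_a\le x_p)-\P(V_b\le x_p)|\ge\varepsilon>0$. Since $\Var(V_a)=\Var(V_b)=2$, both sequences are tight and all their moments are uniformly bounded in $p$. Pass to a further subsequence so that $V_a\Rightarrow W_a$, $V_b\Rightarrow W_b$, and $x_p\to x^*\in\mathbb R$ (tightness rules out $\pm\infty$). By uniform integrability of $|V_a|^m$ and $|V_b|^m$, moments pass to the weak limits, giving $\E W_a^m=\E W_b^m$ for every $m$. Since $\kappa_k(W_a)\le 2^{k-1}(k-1)!$, the moments of $W_a$ grow no faster than $(2k)!$, so Carleman's criterion applies and $W_a\stackrel{d}{=}W_b=:W$.

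It remains to convert this into $|F_{V_a}(x_p)-F_{V_b}(x_p)|\to 0$ along the subsequence. Fix $\delta>0$ and choose continuity points $y^-<x^*<y^+$ of $F_W$ within $\delta$ of $x^*$ (atoms form a countable set). For all sufficiently large $p$ we have $y^-\le x_p\le y^+$, so by monotonicity of CDFs together with $F_{V_a}(y^\pm),F_{V_b}(y^\pm)\to F_W(y^\pm)$,
\begin{align*}
|F_{V_a}(x_p)-F_{V_b}(x_p)|\le F_W(y^+)-F_W(y^-)+o(1).
\end{align*}
Letting $\delta\to 0$, the right side tends to the atom mass $\P(W=x^*)$.

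The main obstacle is showing this atom mass vanishes. The cleanest resolution is to verify that $W$ is absolutely continuous by examining $|\hat W(t)|=\lim_p\prod_j(1+4t^2 a_{p,j}^2)^{-1/4}$ and establishing $|\hat W(t)|\to 0$ as $|t|\to\infty$ with sufficient decay (splitting into cases on whether the limiting weights have infinitely many nonzero entries or only finitely many, each giving polynomial decay sufficient for continuity of $F_W$). An alternative is to perturb $x_p$ by a vanishing amount to a nearby non-atom of $W$, exploiting that $F_{V_a}$ and $F_{V_b}$ are themselves continuous (any nontrivial weighted sum of centered $\chi^2$'s is absolutely continuous).
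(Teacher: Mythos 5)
Your route---matching cumulants, extracting subsequential weak limits, and identifying them by the method of moments---is genuinely different from the paper's. The paper instead builds an explicit coupling: it splits $V_a$ into a head of $K_p-1$ terms and a tail, replaces the tail by a Gaussian of matched variance, does the same for $V_b$ with the \emph{same} Gaussian, bounds the $L^1$ distance of the two surrogates by $2K_p\rho_p+(4K_p\rho_p)^{1/2}$ where $\rho_p=\max_j|a_{p,j}-b_{p,j}|$, and converts $L^1$-closeness into Kolmogorov closeness via the uniform anti-concentration bound of Lemma \ref{lem:density}. Your cumulant identity, the $o(1)$ matching of moments, tightness, and the Carleman step are all correct (indeed the moment generating functions are uniformly finite on $|t|<1/2$ since $a_{p,1}\le 1$, so $\E|V_a|^m\le C\,4^m m!$ and the moment problem is determinate).

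The gap is the final step, which you flag yourself but do not close: you must show the common subsequential limit $W$ puts no mass at $x^*$, and neither proposed resolution is carried out. The second one is unsound as stated: continuity of each $F_{V_a}$ for fixed $p$ gives nothing; what is needed is \emph{equi}continuity, i.e.\ anti-concentration uniform in $p$, and without it $F_{V_a}(x_p)$ and $F_{V_b}(x_p)$ can converge to $F_W(x^*-)$ and $F_W(x^*)$ respectively, leaving exactly the atom mass. The first resolution (decay of $|\hat W(t)|=\lim_p\prod_j(1+4t^2a_{p,j}^2)^{-1/4}$) does work, but making it rigorous is precisely the content of the paper's Lemma \ref{lem:density}: one splits on whether $a_{p,1}\le 1/2$, and in that case lower-bounds the elementary symmetric functions of the $a_{p,j}^2$ to get $|\phi_{V_a}(s)|\le(1+4s^2+8b_4s^4+\tfrac{32}{3}b_6s^6)^{-1/4}$ with $b_4\ge 3/4$, $b_6\ge 1/4$, hence a $p$-uniform integrable bound; in the complementary case the single dominant $\chi^2_1$ term gives anti-concentration directly. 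Note that once you have that uniform anti-concentration estimate, the moment/compactness machinery becomes unnecessary---which is why the paper's direct coupling is shorter. So: a viable and genuinely different architecture, but the load-bearing anti-concentration estimate is asserted rather than proved.
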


Interestingly, there is a simple sufficient condition for (\ref{eq:M01959p}). By Weyl's theorem (\citet[Theorem 8.1.5]{MR3024913}), (\ref{eq:M01959p}) follows from
\begin{align}\label{normalized_const}
\rho({\tilde \Sigma / \tilde f - \Sigma/f}) = o_\P(1).
\end{align}
We say that an estimate $\tilde \Sigma$ of $\Sigma$ is {\it normalized consistent} if (\ref{normalized_const}) holds. It is closely related to, but quite different from the classical definition of spectral norm consistency in the sense of
\begin{align}\label{eq:M5011014p}
\rho({\tilde \Sigma - \Sigma}) = o_\P(1).
\end{align}
Normalized consistency does not generally imply the spectral norm consistency (\ref{eq:M5011014p}). For example, let $n = p$ and $X_i$ be i.i.d. standard $N(0, {\rm Id}_p)$ random vectors. By the random matrix theory, (\ref{eq:M5011014p}) does not hold for the sample covariance matrix $\hat \Sigma = n^{-1} \sum_{i=1}^n X_i X_i ^T$, which is not a consistent estimate of $\Sigma = {\rm Id}_p$; see \citet{marvcenko1967distribution, MR0467894, MR0566592}. Indeed, the largest eigenvalue of $\tilde \Sigma$ converges to $4$, while the smallest one converges to $0$. However the normalized consistency (\ref{normalized_const}) holds since both $\rho(\Sigma/f) = p^{-1/2} \to 0$ and $\rho(\hat \Sigma / \hat f) = O_\P(p^{-1/2}) \to 0$. Without further conditions, the spectral norm consistency (\ref{eq:M5011014p}) does not imply the normalized consistency either. Proposition \ref{prop:M020905a} relates these two types of convergence.

\begin{prop}
\label{prop:M020905a}
For an estimate $\tilde \Sigma$ of $\Sigma$ with $\tilde f = (\tr(\tilde \Sigma^2))^{1/2}$, assume that $\tilde f / f \to 1$ in probability. Then the normalized consistency (\ref{normalized_const}) holds if and only if $\rho({\tilde \Sigma - \Sigma}) = o_\P(f)$.
\end{prop}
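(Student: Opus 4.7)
The plan is to derive the equivalence by a single algebraic decomposition and the elementary bound $\rho(\tilde\Sigma) \le \tilde f$. Write
\begin{align*}
\frac{\tilde\Sigma}{\tilde f} - \frac{\Sigma}{f}
 \;=\; \frac{\tilde\Sigma - \Sigma}{f} \;+\; \tilde\Sigma \cdot \frac{f - \tilde f}{f\,\tilde f}.
\end{align*}
Since $\tilde\Sigma$ is symmetric (positive semidefinite) with eigenvalues $\tilde\lambda_1 \ge \cdots \ge \tilde\lambda_p \ge 0$, the spectral norm satisfies $\rho(\tilde\Sigma) = \tilde\lambda_1 \le (\sum_j \tilde\lambda_j^2)^{1/2} = \tilde f$. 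Therefore, by submultiplicativity and the assumption $\tilde f/f \to 1$ in probability,
\begin{align*}
\rho\!\left( \tilde\Sigma \cdot \frac{f - \tilde f}{f\,\tilde f} \right) \;\le\; \frac{\rho(\tilde\Sigma)}{\tilde f} \cdot \frac{|f-\tilde f|}{f} \;\le\; \frac{|f - \tilde f|}{f} \;=\; o_\P(1).
\end{align*}

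Having established this, both implications follow from the triangle inequality applied to the displayed identity. If $\rho(\tilde\Sigma - \Sigma) = o_\P(f)$, then $\rho((\tilde\Sigma - \Sigma)/f) = o_\P(1)$, and combining with the bound above gives $\rho(\tilde\Sigma/\tilde f - \Sigma/f) = o_\P(1)$, i.e.\ normalized consistency (\ref{normalized_const}). Conversely, rearranging the identity as
\begin{align*}
\frac{\tilde\Sigma - \Sigma}{f} \;=\; \left(\frac{\tilde\Sigma}{\tilde f} - \frac{\Sigma}{f}\right) \;-\; \tilde\Sigma \cdot \frac{f - \tilde f}{f\,\tilde f},
\end{align*}
if (\ref{normalized_const}) holds then the right side is $o_\P(1)$ in spectral norm, so $\rho(\tilde\Sigma - \Sigma) = o_\P(f)$.

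There is no real obstacle here: the argument is entirely elementary. The one point that merits explicit mention is the spectral-Frobenius comparison $\rho(\tilde\Sigma) \le \tilde f$, which is what makes the hypothesis $\tilde f/f \to 1$ automatically convert the scalar perturbation of $\tilde\Sigma/\tilde f$ into a vanishing spectral-norm error. No additional assumption on $\tilde\Sigma$ beyond symmetry (implicit in it being an estimator of the symmetric matrix $\Sigma$) is needed, and the proof requires no probabilistic tools other than the continuous mapping/Slutsky-type manipulation of $o_\P(1)$ terms.
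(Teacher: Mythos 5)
Your proof is correct and follows essentially the same route as the paper's: split $\tilde\Sigma/\tilde f - \Sigma/f$ into a matrix-difference term plus a scalar-perturbation term, control the latter via the spectral--Frobenius bound $\rho(\cdot)\le|\cdot|_F$ together with $\tilde f/f\to 1$, and apply the triangle inequality in both directions. The only cosmetic difference is that the paper normalizes the difference term by $\tilde f$ and attaches the scalar perturbation to $\Sigma/f$ (using $\rho(\Sigma/f)\le 1$), whereas you normalize by $f$ and attach it to $\tilde\Sigma/\tilde f$ (using $\rho(\tilde\Sigma)\le\tilde f$); the two decompositions are interchangeable.
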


Let $\tilde \Sigma$ be a normalized consistent estimate of $\Sigma$.  Given $\alpha \in (0, 1)$, let $\tilde v_{1-\alpha}$ be such that the conditional probability $\P^*(\tilde V \le \tilde v_{1-\alpha}) = 1-\alpha$; cf (\ref{eq:M011001p}). Then at level $\alpha$ we reject the null hypothesis $H_0: \mu = 0$ if the test statistic $\hat R_n :=  (n |\bar X_n|_2^2 - \hat f_1) / f^\dagger$ satisfies $\hat R_n > \tilde v_{1-\alpha}$, where $f^\dagger$ is a ratio consistent estimate of $f$, namely $f^\dagger / f - 1 = o_\P(1)$; see \citet{MR1399305, MR2604697}, and $\hat f_1 = (n-1)^{-1} \sum_{i=1}^n (X_i - \bar X_n)^T  (X_i - \bar X_n)$ is an unbiased estimate of $f_1$. Note that, interesting, the numerators of $\hat R_n$ and $\tilde R_{n}$ in (\ref{eq:M220648p}) are equivalent in view of $n |\bar X_n|_2^2 - \hat f_1 = (n-1)^{-1} \sum_{i \not= j \le n} X_i^T X_j$. It is easily seen that, if $\mu$ satisfies $n \mu^T \mu / f \to \infty$, then $H_0: \mu = 0$ is rejected with probability going to $1$.

Under certain structural assumptions such as bandedness and sparsity, various regularized procedures have been proposed so that the spectral norm consistency (\ref{eq:M5011014p}) holds; see \citet{MR2024760, MR2485008, MR2387969, MR2847973} among others. In our setting we do not make such structural assumptions, and therefore simply use the sample covariance matrix $\hat \Sigma$. Its normalized consistency is dealt with in Theorem \ref{thm:spec_Sigma_over_f}. It is interesting to study whether other covariance matrix estimates are normalized consistent.

\begin{thm}
\label{thm:spec_Sigma_over_f}
(i) Assume $\E[(\vX_1^T\vX_1)^2]=o\rbr{ nf^2}$.  Then
\begin{align}\label{spec_Sigma_over_f}
\E| \hat\Sigma / \hat f - \Sigma / f|^2_F = o(1),
\end{align}
which further implies the normalized consistency (\ref{normalized_const}). (ii) Assume $n f^2 = o\rBr{\E[(\vX_1^T\vX_1)^2]}$, (\ref{cond_1}) holds with $K_2=O(n^{3/4})$, and
\begin{align}
\label{high_moment2}
\E[(\vX_1^T\vX_2)^4]&=o\rBr{ \E^2[(\vX_1^T\vX_1)^2]}.
\end{align}
Then $\rho(\hat\Sigma / \hat f) = o_\P(1)$, and (\ref{normalized_const}) holds if and only if $\rho(\Sigma) = o(f)$.
\end{thm}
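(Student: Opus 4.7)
For part (i), my plan is to transfer a Frobenius-norm bound on $\hat\Sigma - \Sigma$ to its normalized version. Direct computation using independence gives $\E\rBR{\hat\Sigma - \Sigma}_F^2 = n^{-1}\rbr{\E(\vX_1^T\vX_1)^2 - f^2}$, which is $o(f^2)$ by the hypothesis; hence $\rBR{\hat\Sigma - \Sigma}_F/f \to 0$ in $L^2$. The reverse triangle inequality $|\hat f - f|\le \rBR{\hat\Sigma - \Sigma}_F$ then yields $\hat f/f \to 1$ in probability. Writing $\hat\Sigma/\hat f - \Sigma/f = (\hat\Sigma - \Sigma)/\hat f + \Sigma(f - \hat f)/(f\hat f)$ and applying the triangle inequality gives $\rBR{\hat\Sigma/\hat f - \Sigma/f}_F \le 2\rBR{\hat\Sigma - \Sigma}_F/\hat f = o_\P(1)$. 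Since this Frobenius norm is a priori bounded by $\rBR{\hat\Sigma/\hat f}_F + \rBR{\Sigma/f}_F = 2$, bounded convergence upgrades the in-probability convergence to $L^2$, giving (\ref{spec_Sigma_over_f}); then (\ref{normalized_const}) follows because spectral norm is dominated by Frobenius.

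For part (ii), write $E_1 := \E(\vX_1^T\vX_1)^2$ and pass to the Gram matrix $G := \X_n^T\X_n$, so that $\rho(\hat\Sigma) = \mu_1(G)/n$ and $\hat f^2 = n^{-2}\rBR{G}_F^2 = n^{-2}(A+B)$, where $A = \sum_i(\vX_i^T\vX_i)^2$ and $B = \sum_{i \ne j}(\vX_i^T\vX_j)^2$. The assumption $K_2 = O(n^{3/4})$ yields $\E(\vX_1^T\vX_1)^4 \le C(f_1^4 + n^3 f^4)$; combining this with $f_1^2 \le E_1$ and the standing hypothesis $nf^2 = o(E_1)$ gives $\Var(A/n^2) = o((E_1/n)^2)$, so $A/n^2 = (1+o_\P(1))E_1/n$. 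Markov applied to $\E B \le n^2 f^2 = o(nE_1)$ yields $B = o_\P(nE_1)$, whence $\hat f^2 = (1+o_\P(1))E_1/n$. To bound $\mu_1(G)$, I would split $G$ into its diagonal part $D$ and its off-diagonal part $H$ and invoke Weyl's inequality: $\mu_1(G) \le \mu_1(D) + \rho(H) = \max_i \vX_i^T\vX_i + \rho(H)$. A fourth-moment union bound using $\E(\vX_1^T\vX_1)^4 = o(nE_1^2)$ yields $\max_i \vX_i^T\vX_i = o_\P(\sqrt{nE_1})$, while $\rho(H) \le \rBR{H}_F = \sqrt B = o_\P(\sqrt{nE_1})$. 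Dividing by $n$ and by $\hat f \sim \sqrt{E_1/n}$ gives $\rho(\hat\Sigma)/\hat f = o_\P(1)$; condition (\ref{high_moment2}) serves to furnish a refined variance bound on $B$ if a quantitative rate is desired rather than the qualitative $o_\P$.

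The ``if and only if'' claim then follows from the two triangle inequalities $\rho(\hat\Sigma/\hat f - \Sigma/f) \le \rho(\hat\Sigma/\hat f) + \rho(\Sigma)/f$ and $\rho(\Sigma)/f \le \rho(\hat\Sigma/\hat f - \Sigma/f) + \rho(\hat\Sigma/\hat f)$, together with the fact that $\rho(\Sigma)/f$ is deterministic. The main obstacle I anticipate is the balancing act in part (ii): one has to establish that the top eigenvalue $\mu_1(G)$ is strictly negligible compared to $\rBR{G}_F$, even though both a priori scale like $\sqrt{nE_1}$. The hypothesis $K_2 = O(n^{3/4})$ is the critical input here, since it is exactly what makes the fourth-moment union bound on $\max_i \vX_i^T\vX_i$ strong enough to fall below $\sqrt{nE_1}$, thereby allowing the Weyl decomposition to yield a genuine $o_\P$-gain over the Frobenius scale.
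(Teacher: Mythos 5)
Your part (i) is essentially the paper's argument: the same identity $\E|\hat\Sigma-\Sigma|_F^2 = n^{-1}\E[(X_1^TX_1)^2]-n^{-1}f^2=o(f^2)$, the reverse triangle inequality for $|\hat f-f|$, and a two-term decomposition of $\hat\Sigma/\hat f-\Sigma/f$; the only cosmetic difference is that you divide by $\hat f$ and upgrade $o_\P(1)$ to the $L^2$ statement (\ref{spec_Sigma_over_f}) by bounded convergence, whereas the paper divides by $f$ and stays in $L^2$ throughout. Part (ii) is where you take a genuinely different, and more elementary, route. The paper sandwiches $(\rho(\hat\Sigma)/\hat f)^4\le \hat f_4^4/\hat f^4\le(\rho(\hat\Sigma)/\hat f)^2$, reduces to $\hat f_4^4/\hat f^4=o_\P(1)$, and proves this by expanding $n^4\E\,\tr(\hat\Sigma^4)$ and $n^4\E\hat f^4$ into seven terms each according to the coincidence pattern of sample indices; condition (\ref{high_moment2}) enters there precisely through the term $2n(n-1)\sum_{j,k,m,q}\rbr{\E(X_{1j}X_{1k}X_{1m}X_{1q})}^2=2n(n-1)\E[(X_1^TX_2)^4]$ needed to get $\Var(\hat f^2)=o(\E^2 \hat f^2)$. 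You instead pass to the $n\times n$ Gram matrix, apply Weyl to the diagonal/off-diagonal split, control $\max_i X_i^TX_i$ by a union bound using $\E[(X_1^TX_1)^4]=o(n E_1^2)$ with $E_1:=\E[(X_1^TX_1)^2]$ (this is exactly where $K_2=O(n^{3/4})$ and $nf^2=o(E_1)$ combine, reproducing the paper's bound (\ref{high_moment4})), and bound the off-diagonal block by its Frobenius norm, whose square has mean $n(n-1)f^2=o(nE_1)$. Because your concentration of $\hat f^2$ also handles the off-diagonal contribution by a first-moment Markov bound, you are right that (\ref{high_moment2}) is never invoked for the qualitative conclusion $\rho(\hat\Sigma/\hat f)=o_\P(1)$: your argument actually proves the statement under weaker hypotheses, and identifies (\ref{high_moment2}) as an artifact of the paper's particular variance computation for $\hat f^2$. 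What the paper's trace-moment method buys in exchange is direct control of $\hat f_4/\hat f$, the quantity tied to asymptotic normality of $V$; your route is shorter and avoids the combinatorial expansion. The closing ``if and only if'' via the two triangle inequalities and the determinism of $\rho(\Sigma)/f$ is correct and is precisely the step the paper leaves implicit.
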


Theorem \ref{thm:spec_Sigma_over_f}(i) requires that $n$ is big enough such that $\E[(\vX_1^T\vX_1)^2] / f^2=o(n)$, and the approximate distribution $V$ in (\ref{eq:M180343p}) may or may not be asymptotically normal. The latter condition trivially holds if the entries of $X_1$ are strongly dependent in the sense that $f^2 = \sum_{j, k \le p} \sigma^2_{j, k} \asymp p^2$ and $\max_{j \le p} \|X_{1 j} \|_4 \le C$ for some constant $C$. In this case $\E[(\vX_1^T\vX_1)^2]  \le p^2 C^4$ and the condition $\E[(\vX_1^T\vX_1)^2] / f^2=o(n)$ reduces to the natural one $n \to \infty$. As a simple example, let $X_{1 j} = a_j Z + \xi_j$, where $Z, \xi_1, \ldots, \xi_p$ are i.i.d. $N(0, 1)$ and $a_j$ are real coefficients. If $\sum_{j=1}^p a_j^2 \asymp p$, then $\E[(\vX_1^T\vX_1)^2] / f^2 \asymp 1$ and the condition $n \to \infty$ suffices. In this case $\Sigma$ has $p-1$ eigenvalues $1$ and $1$ eigenvalue $1+\sum_{j=1}^p a_j^2$, hence $V \Rightarrow \chi_1^2 - 1$. Under Case (ii) with smaller $n$, however, normalized consistency of $\hat\Sigma$ necessarily requires that $\rho(\Sigma)=o(f)$.

Proposition \ref{prop:M181017} provides an expression for the quantity $\E[(\vX_1^T\vX_2)^4]$ in (\ref{high_moment2}). Its proof is routine and the details are omitted.

\begin{prop}\label{prop:M181017}
We have the cumulants expression
\begin{eqnarray*}
\E[(\vX_1^T\vX_2)^4] 
 &=& 3f^4+6f_4^4+6\sum_{1\le j,k,m,q\le p}\cum(X_{1j},X_{1k},X_{1m},X_{1q})\sigma_{km}\sigma_{qj}\\
 &&+\sum_{1\le j,k,m,q\le p}\cum(X_{1j},X_{1k},X_{1m},X_{1q})^2.
\end{eqnarray*}
\end{prop}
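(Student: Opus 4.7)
The plan is to expand $(\vX_1^T\vX_2)^4 = \sum_{j,k,m,q} X_{1j}X_{1k}X_{1m}X_{1q}\, X_{2j}X_{2k}X_{2m}X_{2q}$, take expectations, and exploit the independence of $\vX_1$ and $\vX_2$ together with their identical distribution. This reduces the problem to
\begin{equation*}
\E[(\vX_1^T\vX_2)^4] \;=\; \sum_{1\le j,k,m,q\le p} \mu_{jkmq}^2,
\qquad \mu_{jkmq}:=\E[X_{1j}X_{1k}X_{1m}X_{1q}].
\end{equation*}
So the entire identity follows from decomposing $\mu_{jkmq}^2$ using the moment–cumulant relation and organizing the resulting contractions.

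Next, since $\E X_{1j}=0$, the standard fourth-order moment–cumulant relation reads
\begin{equation*}
\mu_{jkmq}=\kappa_{jkmq}+\sigma_{jk}\sigma_{mq}+\sigma_{jm}\sigma_{kq}+\sigma_{jq}\sigma_{km},
\end{equation*}
with $\kappa_{jkmq}=\cum(X_{1j},X_{1k},X_{1m},X_{1q})$. Squaring yields $10$ terms: one $\kappa^2$ term, three cross terms $2\kappa\cdot\sigma\sigma$, three squares of $\sigma\sigma$ pairs, and three pairwise products of distinct $\sigma\sigma$ pairs. I would then sum each piece over $j,k,m,q$ separately.

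For the purely covariance contributions, the three squares each give $f^4$, e.g.\ $\sum_{j,k,m,q}(\sigma_{jk}\sigma_{mq})^2=(\sum_{jk}\sigma_{jk}^2)^2=f^4$, producing $3f^4$. The three pairwise cross products each form a cyclic contraction that telescopes to $\tr(\Sigma^4)=f_4^4$; for instance
\begin{equation*}
\sum_{j,k,m,q}\sigma_{jk}\sigma_{mq}\sigma_{jm}\sigma_{kq}
=\sum_{j,k}\sigma_{jk}(\Sigma^{3})_{jk}=\tr(\Sigma^4)=f_4^4,
\end{equation*}
and the other two pair-products reduce identically after relabeling indices. These give $6f_4^4$. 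For the $\kappa$–$\sigma\sigma$ cross terms, the full symmetry of the cumulant $\kappa_{jkmq}$ in its four arguments makes the three sums $\sum\kappa\sigma_{jk}\sigma_{mq}$, $\sum\kappa\sigma_{jm}\sigma_{kq}$, $\sum\kappa\sigma_{jq}\sigma_{km}$ equal after permuting the summation indices, so the $2\kappa(\cdots)$ contribution collapses to $6\sum_{j,k,m,q}\kappa_{jkmq}\sigma_{km}\sigma_{qj}$. Combining these four groups with the leftover $\sum\kappa_{jkmq}^2$ yields the claimed formula.

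The only non-routine step is the cyclic-contraction identification that turns each cross product of two distinct Wick pairings into $\tr(\Sigma^4)$; the rest is bookkeeping, and the invocation of cumulant symmetry to merge the three $\kappa$–$\sigma\sigma$ sums into a single sum with coefficient $6$. No serious obstacle is expected, which is consistent with the authors' remark that the proof is routine.
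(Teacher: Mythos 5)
Your computation is correct and is exactly the routine argument the paper has in mind (the authors omit the proof, noting it is routine): factor the expectation over the independent copies to get $\sum_{j,k,m,q}\mu_{jkmq}^2$, expand each $\mu_{jkmq}$ via the fourth-order moment--cumulant relation for mean-zero variables, and collect the ten resulting contractions into $3f^4$, $6f_4^4$, the $6\sum\cum(\cdot)\sigma\sigma$ term, and $\sum\cum(\cdot)^2$. Nothing further is needed.
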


\subsection{A Subsampling Procedure}
\label{sec:O230149p}
Let $m = m_n \in \mathbb N$ be such that $m \to \infty$ and $m = o(n)$; let the index set $B_j = \{ l \in \mathbb Z:  (j-1)m < l \le j m\}$, $j = 1, \ldots, L$, where $L = \lfloor n/m \rfloor$ and $\lfloor u \rfloor = \max\{k \in \mathbb Z: k \le u\}$. For a set $B \subset \{1, \ldots, n \}$, let $|B|$ be its cardinality. Define the empirical subsampling distribution function
\begin{eqnarray}
\label{eq:O240903a}
\hat F(t) = {1\over L} \sum_{j=1}^L {\bf 1}_{ m|\bar X_{B_j} - \bar X|_2^2 \le t(1-m/n)}, 
 \mbox{ where }
  \bar X_B = {{\sum_{b \in B} X_b} \over {|B|}}.
\end{eqnarray}
As a slightly different version, let $A_1, \ldots, A_J$ be i.i.d. uniformly sampled from the class $\mathcal{A} := \rBr{A: A\subset \{1, \ldots, n\}, \rBR{A}=m}$. Assume that the sampling process $(A_j)_{j \ge 1}$ and $(X_i)_{i \ge 1}$ are independent. Define
\begin{eqnarray}
\label{eq:O250918p}
\check F(t) = {1\over J} \sum_{j=1}^J {\bf 1}_{ m|\bar X_{A_j} - \bar X|_2^2 \le t(1-m/n)}.
\end{eqnarray}

\begin{thm}
\label{th:O240914a}
Let $0 < \delta \le 1$. Assume (\ref{cond_1}), (\ref{cond_2}), $m \to \infty$, $m = o(n)$, and (\ref{eq:O230925p}) holds with $n$ therein replaced by $m$. Then (i) 
\begin{eqnarray}
\label{eq:O240913a}
\sup_t |\hat F(t) - \P( n |\bar X - \mu|_2^2 \le t)| \to 0 \mbox{ in probability}. 
\end{eqnarray}
(ii) If $J \to \infty$, then the convergence (\ref{eq:O240913a}) also holds for $\check F(t)$. 
\end{thm}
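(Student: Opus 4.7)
The plan mimics the classical subsampling argument. First, since (\ref{eq:O230925p}) is assumed at sample size $m$, it also holds at any larger sample size, so Theorem~\ref{thm:true_sigma} applied at both $n$ and $m$ gives $\sup_t |\P(n|\bar X - \mu|_2^2 \le t) - F_m(t)| \to 0$, where $F_m(t) := \P(m|\bar X_{B_1} - \mu|_2^2 \le t)$; it thus suffices to show $\sup_t |\hat F(t) - F_m(t)| = o_\P(1)$.

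The key algebraic identity $\bar X_{B_j} - \bar X = (1 - m/n)(\bar X_{B_j} - \bar X_{B_j^c})$, with $B_j^c := \{1,\ldots,n\} \setminus B_j$, rewrites $\zeta_j := m|\bar X_{B_j} - \bar X|_2^2/(1 - m/n) = |W_j|_2^2$, where $W_j := \sqrt{m(1 - m/n)}(\bar X_{B_j} - \bar X_{B_j^c}) = \sum_{i=1}^n w_{i,j}\vX_i$ is a linear combination with $\sum_i w_{i,j}^2 = 1$, so $\Cov(W_j) = \Sigma$. In the Gaussian case $|W_j|_2^2 \stackrel{d}{=} n|\bar X - \mu|_2^2$ holds exactly, confirming that the $(1 - m/n)$ rescaling is the right calibration. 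The general case $\sup_t |\P(\zeta_j \le t) - F_m(t)| \to 0$ is then established by rerunning the Stein-type normal comparison behind Theorem~\ref{thm:true_sigma} with the weighted sum $W_j$ in place of the unweighted sample mean, exploiting that the proof only uses independence of the summands, the total covariance $\Sigma$, and $L^q$-moment bounds on the individual contributions $w_{i,j}\vX_i$---all of which transfer from (\ref{cond_1})--(\ref{cond_2}). This yields $\sup_t |\E \hat F(t) - F_m(t)| \to 0$.

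For concentration of $\hat F(t)$ around $\E \hat F(t)$: the $\zeta_j$'s are weakly dependent through $\bar X$; for $j \ne k$, partitioning $\{1,\ldots,n\}$ into $B_j \sqcup B_k \sqcup C$ with $C := \{1,\ldots,n\} \setminus (B_j \cup B_k)$ and using independence of the three block sums yields $\Cov(W_j, W_k) = -m(n - m)^{-1}\Sigma$, of spectral norm $O((m/n)\rho(\Sigma))$. A smoothness bound on the joint law of $(|W_j|_2^2, |W_k|_2^2)$ (relying on the absolute continuity of the limit $V$, which holds since $\Var(V) > 0$) converts this to $\Cov({\bf 1}_{\zeta_j \le t}, {\bf 1}_{\zeta_k \le t}) = o(1)$ uniformly in $t$, so that $\Var(\hat F(t)) = o(1)$. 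A standard grid argument (pick $t_1 < \ldots < t_N$ on which $F_m$ has increments $\le \varepsilon$; apply Chebyshev at each $t_i$; use monotonicity to bound the envelope between grid points) then delivers $\sup_t |\hat F(t) - \E \hat F(t)| = o_\P(1)$ and completes Part~(i). For Part~(ii), condition on $\vX_n$: given the data the $A_j$'s are i.i.d.\ uniform on size-$m$ subsets, so a conditional Glivenko--Cantelli yields $\sup_t |\check F(t) - \E^*\check F(t)| \to 0$ as $J \to \infty$, and exchangeability of the $\vX_i$ implies the marginal law of $\bar X_{A_1}$ equals that of $\bar X_{B_1}$, giving $\E \check F(t) = \E \hat F(t)$ and reducing Part~(ii) to Part~(i).

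The principal obstacle is the marginal approximation $\sup_t |\P(|W_j|_2^2 \le t) - F_m(t)| \to 0$. The Remark at the end of Section~\ref{sec:gar} covers independent-non-identically-distributed summands only under a common covariance, whereas the $w_{i,j}\vX_i$ have scalar-multiple covariances $w_{i,j}^2 \Sigma$; one must revisit the proof of Theorem~\ref{thm:true_sigma} and verify that the normal-comparison machinery carries through for weighted sums satisfying $\sum_i w_{i,j}^2 \Sigma = \Sigma$. The Gaussian-case exactness is a reassuring signpost that the $(1 - m/n)$ calibration is correctly chosen.
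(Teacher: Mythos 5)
Your architecture matches the paper's (compare both $\hat F$ and the target to the common limit $V$ via Theorem \ref{thm:true_sigma} applied at sizes $m$ and $n$; handle the recentering at $\bar X$ through the identity $\bar X_{B_j}-\bar X=(1-m/n)(\bar X_{B_j}-\bar X_{B_j^c})$; second-moment concentration plus a grid argument for uniformity; a conditional argument for part (ii)), but the two load-bearing steps are left as assertions, and in both cases the paper resolves them by a simpler route that your own identity already sets up. For the marginal step you propose to rerun the Stein-type comparison for the weighted sum $W_j=\sum_i w_{i,j}X_i$, and you correctly flag this as the principal obstacle --- but you never carry it out, and the weights are highly unequal (of order $m^{-1/2}$ on $B_j$ versus $m^{1/2}/n$ off $B_j$), so the error functional $L_\delta(n,\psi)$ would have to be re-derived from scratch. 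The paper avoids this entirely: expanding the square gives
\[
W_{B_j} \;=\; \frac{n-m}{n}\,W_{B_j}^\circ \;+\; \frac{m}{n}\,W_{B_j^c}^\circ \;-\; 2m\,\frac{n-m}{n}\,\frac{\bar X_{B_j}^T\bar X_{B_j^c}}{f},
\]
and since $\E|\bar X_{B_j}^T\bar X_{B_j^c}|^2=f^2/(m(n-m))$ and $m=o(n)$, everything except the first term is $o_\P(1)$; a sandwich with the anti-concentration bound of Lemma \ref{lem:density} then transfers Theorem \ref{thm:true_sigma} (applied to the $m$ points of $B_j$ alone, which is exactly where (\ref{eq:O230925p}) with $m$ is used) to $W_{B_j}$. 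No weighted invariance principle is needed.

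The second gap is the decorrelation of the indicators. You compute $\Cov(W_j,W_k)$ and invoke ``a smoothness bound on the joint law of $(|W_j|_2^2,|W_k|_2^2)$'' to conclude $\Cov(\mathbf 1_{\zeta_j\le t},\mathbf 1_{\zeta_k\le t})=o(1)$; no such bivariate comparison is supplied, and smallness of the covariance of the underlying vectors does not by itself control covariances of indicators of quadratic forms. The paper again uses the perturbation: for $j\ne j'$ the statistics $W_{B_j}^\circ$ and $W_{B_{j'}}^\circ$ are built from disjoint blocks and hence are exactly independent, so the joint probability factorizes after removing the same $o_\P(1)$ remainders, giving $\E|\hat F(t)-\P(V\le t)|^2\to0$ directly. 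Finally, in part (ii), matching $\E\check F(t)=\E\hat F(t)$ by exchangeability does not by itself reduce (ii) to (i): you still need $\Var(\E^*\check F(t))\to0$, which involves the overlapping subsets $A_j\cap A_{j'}$; the paper conditions on the event $|A_j\cap A_{j'}|\le m\rho_n$ (which has probability tending to one since $\E|A_j\cap A_{j'}|\le m^2/n$) and reruns the perturbation argument there. Your reduction could be repaired via Hoeffding's representation of the complete subset average as a mixture of disjoint-block averages, but that step has to be stated; as written it is missing.
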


Theorem \ref{th:O240914a} suggests that samples quantiles of $\hat F(\cdot)$ or $\check F(\cdot)$ can be used to approximate those of $F(t) = \P( n |\bar X - \mu|_2^2 \le t)$.  Given a level $\alpha \in (0, 1)$, let $\check v_{1-\alpha}$ be the $(1-\alpha)$th quantile of $\check F(\cdot)$. Then at level $\alpha$ we can reject the null hypothesis $H_0: \mu = 0$ if $n |\bar X|_2^2 \ge \check v_{1-\alpha}$. Similarly as the plug-in approach, if $n \mu^T \mu / f \to \infty$, then $H_0$ is rejected with probability going to $1$.

{\it Proof of Theorem \ref{th:O240914a}.} (i) Assume without loss of generality that $\mu = 0$. For a set $B \subset \{1, \ldots, n \}$ define $W_B^\circ = (|B| |\bar X_{B}|_2^2 - f_1 ) / f$ and $W_B = [|B| |\bar X_{B} - \bar X|_2^2 / (1- |B|/n) - f_1 ] / f$. Using the identity $n \bar X = |B| \bar X_{B} + (n-|B|) \bar X_{B^c}$, where $B^c = \{1, \ldots, n\} - B$, we have by elementary manipulations that
\begin{eqnarray}
W_B = { {n-|B|} \over n} W_B^\circ +  {{|B|} \over n} W_{B^c}^\circ
 -  {2 |B|}  { {n- |B| } \over n} {{ \bar X^T_{B} \bar X_{B^c} } \over f}.
\end{eqnarray}
Then for any $\theta > 0$, we have by the triangle inequality that 
\begin{align}\label{eq:O240954}
\P( W_{B_j}^\circ \le {{ t - \theta} \over {1-m/n}})
 - \tau
\le
\P(W_{B_j} \le t) \le \P( W_{B_j}^\circ \le {{t + \theta}\over{1-m/n}})
 + \tau,
\end{align}
where $\tau = \P( |R_j| \ge \theta)$, $R_j = (m/n) W_{B_j^c}^\circ - 2 m (1-m/n)^{-1} f^{-1}  \bar X^T_{B_j} \bar X_{B_j^c}$. Note that $\E|\bar X^T_{B_j} \bar X_{B_j^c}|^2 = f^2/(m(n-m))$. Since $m = o(n)$, by Theorem \ref{thm:true_sigma}, we have $R_j = o_\P(1)$ and $\tau \to 0$. Hence by Theorem \ref{thm:true_sigma}, Lemma \ref{lem:density} and (\ref{eq:O240954}), 
\begin{eqnarray}\label{eq:O241004a}
\P(W_{B_j} \le t) - \P( W_{B_j}^\circ \le t) \to 0. 
\end{eqnarray}
A similar argument implies that, for $j \not= j'$, the joint probability
\begin{eqnarray}\label{eq:O241009a}
\P(W_{B_j} \le t,   W_{B_{j'}} \le t) - \P( W_{B_j}^\circ \le t, W_{B_{j'}}^\circ \le t) \to 0. 
\end{eqnarray}
Therefore, by Theorem \ref{thm:true_sigma}, we have $\E| \hat F(t) - \P(V \le t)|^2 \to 0$, which implies the uniform version (\ref{eq:O240913a}) via the standard Glivenko--Cantelli argument in view of the continuity result Lemma \ref{lem:density}.

We now prove (ii). Following the argument in (i), it suffices to show that
\begin{eqnarray}\label{eq:O310956p}
\E|\P(W^\circ_{A_j} \le t,   W^\circ_{A_{j'}} \le t) - \P^2( V \le t)| \to 0.
\end{eqnarray}
For sets $A, A' \in \mathcal{A}$, let $A \cap A' = D_1$, $A - D_1 = D_2$ and $A'  - D_1 = D_3$. Then  
\begin{eqnarray*}
W^\circ_A=(1-k/m)W^\circ_{D_2}+(k/m)W^\circ_{D_1}+2k(1-k/m)\frac{\bar X_{D_1}^T\bar X_{D_2}}{f},
\end{eqnarray*}
where $k = |D_1|$. A similar expression exists for $W^\circ_{A'}$. Choose a sequence $\rho_n \to 0$ with $m / n = o(\rho_n)$. If $k \le m \rho_n$, similarly as in part (i), we have $|\P(W^\circ_{A} \le t,   W^\circ_{A'} \le t)-\P(W^\circ_{D_2} \le t,   W^\circ_{D_3} \le t)| \to 0$ and $|\P(W^\circ_{D_2} \le t) - \P(V \le t)| \to 0$. Note that $\E|A_j \cap A_{j'}| \le m^2 / n$. Then 
$\P( |A_j \cap A_{j'}| \ge m \rho_n) \le m / (n \rho_n) \to 0$. Then (\ref{eq:O310956p}) follows by conditioning on $|A_j \cap A_{j'}| \le m \rho_n$. \qed

\section{Applications to Linear Processes}
\label{sec:cov_inf}
In this section we shall apply our main result to the linear process
\begin{align}\label{linear}
\vX_{i}=A\vxi_{i} = A (\xi_{i1}, \ldots, \xi_{i p})^T,
\end{align}
where $\xi_{i j}, i,j \in \mathbb{Z}$, are i.i.d. random variables with mean $0$ and variance $1$ and $A$ is a coefficient matrix.  The linear form (\ref{linear}) is natural and rich. Similar forms were also used in \cite{MR1399305,MR2604697}, among others. Proposition \ref{verify_cond} generalizes Proposition \ref{prop:cond_Y} and it concerns conditions (\ref{cond_1}) and (\ref{cond_2}) where $\vX_i$ is of form (\ref{linear}).

\begin{prop}\label{verify_cond}
Assume (\ref{linear}) and that $\RBR{\xi_1}_{4+2\delta} < \infty$ for some $\delta > 0$. Let $\bar D_\delta=(1+\delta)\RBR{\xi_{1}}_{2+\delta}^{2}$ and $\bar K_\delta = 2 \|\xi_1^2\|_{2+\delta}$. Then
\begin{align}\label{linear_cond_1}
\E\rBR{\vX^T_1\vX_1 - f_1\over f}^{2+\delta}&\le \bar K_\delta^{2+\delta},\\
\label{linear_cond_2}
\E\left|{ {\vX_1^T \vX_2 }\over f}\right|^{2+\delta}&\le \bar D_\delta^{2+\delta}.
\end{align}
\end{prop}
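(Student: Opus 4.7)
Writing $B = A^T A$, we have $\vX_1^T \vX_1 = \vxi_1^T B \vxi_1$ and $\vX_1^T \vX_2 = \vxi_1^T B \vxi_2$, and since $\Sigma = A A^T$ the identities $\tr(B) = f_1$ and $\sum_{j,k} B_{jk}^2 = \tr(B^2) = f^2$ hold. Set $q = 2+\delta$. The plan is to reduce each inequality to repeated applications of Burkholder's martingale inequality in the form $\|\sum_i D_i\|_q^2 \le (q-1)\sum_i \|D_i\|_q^2$ that is already used elsewhere in the paper.

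For the bilinear bound (\ref{linear_cond_2}), I would condition on $\vxi_2$ and treat $\vxi_1^T B \vxi_2 = \sum_j \xi_{1j}\,(B\vxi_2)_j$ as a weighted sum of the i.i.d.\ centered variables $\xi_{1j}$. Burkholder then gives $\E[|\vxi_1^T B\vxi_2|^q \mid \vxi_2] \le (q-1)^{q/2}\|\xi_1\|_q^q\,|B\vxi_2|^q$. To control $\E|B\vxi_2|^q = \E(|B\vxi_2|^2)^{q/2}$, I would apply Minkowski in $L^{q/2}$ to the positive variable $|B\vxi_2|^2 = \sum_j (B\vxi_2)_j^2$ followed by a second coordinatewise Burkholder bound on each linear form $(B\vxi_2)_j = \sum_k B_{jk}\xi_{2k}$:
\begin{equation*}
\bigl\| |B\vxi_2|^2 \bigr\|_{q/2} \le \sum_j \|(B\vxi_2)_j\|_q^2 \le (q-1)\|\xi_1\|_q^2 \sum_{j,k} B_{jk}^2 = (q-1)\|\xi_1\|_q^2\,f^2.
\end{equation*}
Substituting and taking $q$-th roots yields $\|\vxi_1^T B \vxi_2\|_q \le (q-1)\|\xi_1\|_q^2\,f = \bar D_\delta\,f$, exactly the stated constant.

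For the quadratic bound (\ref{linear_cond_1}), I would form the Doob martingale of $\vxi_1^T B \vxi_1$ against $\mathcal{F}_i = \sigma(\xi_{1,1},\ldots,\xi_{1,i})$. A short conditional computation---cross terms with exactly one index exceeding $i$ have mean zero, and terms with both indices exceeding $i$ collapse to $\sum_{j>i} B_{jj}$---identifies the martingale differences
\begin{equation*}
D_i = B_{ii}(\xi_{1i}^2 - 1) + 2\,\xi_{1i}\sum_{k<i} B_{ik}\,\xi_{1k}.
\end{equation*}
Burkholder reduces the task to bounding $\sum_i \|D_i\|_q^2$. The triangle inequality in $L^q$ splits $D_i$ into a diagonal scalar multiple of $\xi_{1i}^2 - 1$ and an off-diagonal piece; for the latter, independence of $\xi_{1i}$ from $(\xi_{1k})_{k<i}$ and one further Burkholder step on the inner linear form give $\|2\xi_{1i}\sum_{k<i} B_{ik}\xi_{1k}\|_q \le 2\sqrt{q-1}\,\|\xi_1\|_q^2\,(\sum_{k<i} B_{ik}^2)^{1/2}$. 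Summing squares via $\sum_i B_{ii}^2 \le f^2$ and $2\sum_i\sum_{k<i} B_{ik}^2 \le f^2$, and using $\|\xi_1^2-1\|_q \le 2\|\xi_1^2\|_q$ (because $\E\xi_1^2 = 1 \le \|\xi_1^2\|_q$) together with $\|\xi_1\|_q^2 \le \|\xi_1^2\|_q$, delivers a bound of the stated form $C(q)\|\xi_1^2\|_q\,f$.

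The main obstacle I anticipate is matching the clean numerical constant $2\|\xi_1^2\|_q$ stated in (\ref{linear_cond_1}): the diagonal and off-diagonal pieces each absorb separate factors $\sqrt{q-1}$ and $q-1$ from successive Burkholder applications, so a naive triangle split leaves a constant noticeably larger than $2$. Recovering the advertised constant likely requires a more careful joint treatment of the two pieces of $D_i$---exploiting that the diagonal and off-diagonal sums are uncorrelated for symmetric $\xi_{1j}$---or a slightly sharper Burkholder-type inequality. By contrast, (\ref{linear_cond_2}) is clean because only a single martingale layer is needed after conditioning on $\vxi_2$, and the constant $(1+\delta)\|\xi_1\|_q^2 = \bar D_\delta$ emerges exactly from two Burkholder steps.
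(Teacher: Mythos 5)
Your proof is correct and follows essentially the same route as the paper's: both bounds come from iterated applications of Burkholder's inequality to martingale decompositions of the bilinear and quadratic forms (your conditioning on $\vxi_2$ reproduces the paper's two-layer Burkholder bound for (\ref{linear_cond_2}) with the exact constant $\bar D_\delta$, and your Doob-martingale increments $D_i = B_{ii}(\xi_{1i}^2-1) + 2\xi_{1i}\sum_{k<i}B_{ik}\xi_{1k}$ are just a re-bracketing of the paper's diagonal/off-diagonal split). The obstacle you flag for (\ref{linear_cond_1}) is real but reflects a slip in the paper's \emph{statement} rather than a gap in your argument: the paper's own proof also only delivers $\|\vX_1^T\vX_1 - f_1\|_q \le 2q\|\xi_1^2\|_q\, f$ (its final display reads $\le 4q^2\|\xi_1^2\|_q^2 f^2$), so $\bar K_\delta$ should read $2(2+\delta)\|\xi_1^2\|_{2+\delta}$; since $K_\delta$ enters the main theorems only up to multiplicative constants, nothing downstream is affected, and your bound of the form $C(q)\|\xi_1^2\|_q f$ is exactly what the paper actually proves.
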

\begin{proof}[Proof of Proposition \ref{verify_cond}] Let $q = 2+\delta$. Since $\vxi$ has covariance matrix ${\rm Id}_p$, $\Sigma=AA^T$. Denote $\rbr{b_{jk}}_{j,k}=B=A^T A$. Write 
$\vxi_1 = (\xi_1, \ldots, \xi_p)^T$ and $\vxi_2 = (\zeta_1, \ldots, \zeta_p)^T$. By Burkholder's inequality, (\ref{linear_cond_2}) follows from
\begin{eqnarray}\label{eq:M21827}
\RBR{\vX_1^T\vX_2}_q^{2}
&=&\left\|{\sum_{j=1}^p \xi_j \sum_{k=1}^p b_{jk}\zeta_k}\right\|_q^{2}\cr
&\le& (q-1)  \sum_{j=1}^p \|\xi_j\|_q^2  \left\|  \sum_{k=1}^p b_{jk}\zeta_k\right\|_q^{2} \cr
&\le& (q-1)^2   \|\xi_1\|_q^2 \sum_{j=1}^p  \sum_{k=1}^p b_{jk}^2 \left\|  \zeta_k\right\|_q^{2} \cr
&=& (q-1)^{2}\RBR{\xi_{1}}_q^{4}f^{2}.
\end{eqnarray}
Since $\xi_j \sum_{k<j}  b_{jk}\xi_k$ are martingale differences, we similarly have
\begin{align}\label{eq:M210757p}
\RBR{\vX^T\vX-f_1}_q^{2}&\le2\big\|{\sumj  b_{jj}\rbr{\xi_j^2-1}}\big\|_q^{2}+2\big\|{\sumne{j}{k}  b_{jk}\xi_j\xi_k}\big\|_q^{2}\cr
&\le 2(q-1)\RBR{\xi_{1}^{2}-1}_q^{2}{\sumj b_{jj}^2}+8(q-1)^{2}\RBR{\xi_{1}}_q^{4}\sum_{k<j}b_{jk}^{2}\cr
&\le 4 q^2 \|\xi_1^2\|_q^2 f^2,
\end{align}
which implies (\ref{linear_cond_1}). 
\end{proof}


We remark that (\ref{linear_cond_2}) actually holds under the weaker moment condition $\xi_i \in {\cal L}^{2+\delta}$. Proposition \ref{verify_cond} implies that the Gaussian approximation (\ref{eq:A291103p}) of Theorem \ref{thm:true_sigma} holds with convergence rate $O(n^{-\delta/(10+4\delta)})$ for linear processes.

\section{Inference of Covariance Matrices}\label{sec:cov}

In this section we shall apply our results to test hypotheses on covariance matrices. The latter problem has been extensively studied in the literature. Earlier papers focus on lower-dimensional case; see \citet{MR1990662, MR0092296, MR0339405, MR0312619}. The traditional likelihood ratio test can fail in the high-dimensional setting (cf. \citet{MR2572444}). Under the assumption that $p/n$ is bounded, or $p = O(n)$,  \citet{MR2572444, MR2234197, MR2328427} considered test of identity, sphericity, and diagonal covariance matrices. Recently, \citet{MR2724863} proposed test statistics for sphericity and identity, and proved the normality with no condition on $p/n$, with $f_4=o(f)$. \citet{MR3015026} considered testing whether a covariance matrix is banded. \citet{MR3127858} applied the empirical likelihood ratio test. Other contributions can be found in \citet{MR3160557, MR3113808, MR2189467, MR2719881, MR2911842, MR1926169}. In many of those papers it is assumed that $X_1$ is Gaussian.

Given the data $X_1, \ldots, X_n$, which are i.i.d. with mean $0$ and covariance matrix $\Sigma$, we test the null hypothesis $H_0: \Sigma = \Sigma_0 = (\sigma_{0, jk})_{j, k \le p}$. Let $\hat \Sigma = \sum_{i=1}^n X_i X_i^T / n$ be the sample covariance matrix. \citet{MR3054550} considered the $L^\infty$ test statistic $\max_{j, k \le p} |\hat\sigma_{jk} - \sigma_{0, jk}|$. The latter test is not powerful if the alternative hypothesis consists of many small but non-zero covariances. Here we shall study the test statistic
\begin{align}\label{eq:M20818p}
T_n = \sumjk\rbr{\hat\sigma_{jk}-\sigma_{0, jk}}^2.
\end{align}
We reject $H_0$ if $T_n$ exceeds certain cutoff values. The problem of deriving asymptotic distribution of $T_n$ has been open. In many of earlier papers it is assumed that $\Sigma_0$ has special structures such as being diagonal or spheric and/or $X_i$ is Gaussian or has independent entries. Here we shall obtain an asymptotic theory for $T_n$ for linear processes of form (\ref{linear}).

We shall apply Theorem \ref{thm:true_sigma}. For $u = (u_1, \ldots, u_p)^T$, let
\begin{align}\label{eq:M20813p}
\vW(u) = \rbr{\begin{array}{c}
u_1^2-\sigma_{11}\\
u_1 u_2-\sigma_{12}\\
\ldots\\
u_1 u_p-\sigma_{1p}\\
u_2 u_1-\sigma_{12}\\
\ldots\\
u_p^2-\sigma_{pp}
\end{array}}
\end{align}
be a $p^2$-dimensional vector. Let $W_i = W(X_i)$ and $\bar W_n = \sum_{i=1}^n W_i / n$. Then $T_n = \bar W_n^T \bar W_n$. Let $\cI=\{(i, j), 1 \le i, j \le p\}$; let the random vector $U = (U_1, \ldots, U_p)^T$ be identically distributed as $X_i$. Then the covariance matrix $\Gamma = (\gamma_{a, a'})_{a, a' \in \cI}$ for $W = W(U)$ is $p^2 \times p^2$ with entries 
\begin{eqnarray*}
\gamma_{(i,j), (k,l) } &=& \E( (U_i U_j - \sigma_{i j})  (U_k U_l - \sigma_{k l}) )  \cr
 &=& \E(U_i U_j U_k U_l) - \sigma_{i j} \sigma_{k l} \cr
 &=& {\rm cum}(U_i, U_j, U_k, U_l)  + \sigma_{i k} \sigma_{j l} + \sigma_{i l} \sigma_{j k}.
\end{eqnarray*}
Let $U^*$ and $U$ be i.i.d. and $W^* = W(U^*)$. Observe that
\begin{eqnarray}
\label{eq:M210755p}
W^T W &=& (U^T U)^2 - 2 U^T \Sigma U + f^2, \cr
W^T W^* &=& (U^T U^*)^2  - U^T \Sigma U - U^{*T} \Sigma U^*  + f^2. 
\end{eqnarray}
In the sequel we shall deal with conditions (\ref{cond_1}) and (\ref{cond_2}) for the process $W_i = W(X_i)$ for $X_i$ satisfying (\ref{linear}). Lemma \ref{lem:denominator} provides a lower bound for $f^2_\vW  = \tr(\Gamma^{2}) = |\E (W W^T)|_F^2$, and Theorem \ref{thm:verify_cond_quad} leads to a bound for the quantities $K_\delta$ and $D_\delta$ for the $W$ vector.

\begin{lem} \label{lem:denominator}
Let $\nu=\Var\rbr{\xi_1^2}$. For $(\vX_{i})$ in (\ref{linear}), we have
\begin{align}\label{denominator}
f^2_\vW  := \sum_{a, b \in \cI} \gamma_{a b}^2 = \tr(\Gamma^{2}) 
 \ge \min(2, \nu^2/2)  f^4.
\end{align}
\end{lem}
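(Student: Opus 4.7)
The plan is to compute $\tr(\Gamma^2)=f_\vW^2$ explicitly for the linear process and then carry out a short quadratic analysis in the fourth cumulant $\kappa_4$. Because $\vX_1 = A\vxi_1$ has i.i.d.\ components with $\kappa_4 := \E\xi_{11}^4-3 = \nu-2$, the joint cumulants reduce to $\cum(X_{1i},X_{1j},X_{1k},X_{1l}) = \kappa_4 \sum_a A_{ia}A_{ja}A_{ka}A_{la}$. Letting $B := A^T A$ and writing $v_a$ for the $a$th column of $A$, so that $v_a^T \Sigma v_a = (B^2)_{aa}$, I substitute into the formula $\gamma_{(i,j),(k,l)} = \cum(U_i,U_j,U_k,U_l) + \sigma_{ik}\sigma_{jl}+\sigma_{il}\sigma_{jk}$ given before the lemma, square, and sum over $(i,j,k,l)$, obtaining after elementary reductions
\[\tr(\Gamma^2) = 2f^4 + 2f_4^4 + \kappa_4^2 S_2 + 4\kappa_4 S_1, \qquad S_1 := \sum_a (B^2)_{aa}^2, \quad S_2 := \sum_{a,b} B_{ab}^4.\]

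If $\nu \ge 2$, then $\kappa_4 \ge 0$, every additional term is nonnegative, and $\tr(\Gamma^2) \ge 2f^4$. For the substantive case $\nu < 2$, put $s := 2-\nu \in (0,2]$ and define
\[h(s) := \tr(\Gamma^2) - (\nu^2/2)f^4 = s^2(S_2 - f^4/2) + 2s(f^4 - 2S_1) + 2f_4^4,\]
so the goal reduces to $h(s)\ge 0$ on $[0,2]$. The key input is the Frobenius Cauchy--Schwarz inequality applied to the decomposition $\Gamma = \Gamma^{(G)} + \kappa_4 D$, where $\Gamma^{(G)}_{(ij),(kl)} := \sigma_{ik}\sigma_{jl}+\sigma_{il}\sigma_{jk}$ is the Gaussian-case covariance and $D := \sum_a \vec(v_av_a^T)\vec(v_av_a^T)^T$. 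Direct calculation gives $\tr((\Gamma^{(G)})^2) = 2f^4+2f_4^4$, $\tr(D^2) = S_2$, $\tr(\Gamma^{(G)} D) = 2S_1$, so $(2S_1)^2 \le (2f^4+2f_4^4)S_2$; combined with the elementary AM--GM $2S_1 \le S_1^2/S_2 + S_2$, this yields $8S_1 \le 2(f^4+f_4^4) + 4S_2$, which is exactly $h(2)\ge 0$, while $h(0)=2f_4^4\ge 0$ is immediate.

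The interior of $[0,2]$ is handled by a sign analysis of the leading coefficient $S_2 - f^4/2$. If $S_2 < f^4/2$, $h$ is concave, so its minimum on $[0,2]$ is attained at an endpoint and is nonnegative. If $S_2 \ge f^4/2$, $h$ is convex with vertex $s^\ast = (2S_1 - f^4)/(S_2 - f^4/2)$; using the elementary inequality $S_1 \ge S_2$ (which is $(\sum_b b_{ab}^2)^2 \ge \sum_b b_{ab}^4$), one checks that $s^\ast \le 0$ when $2S_1 \le f^4$ and $s^\ast \ge 2$ when $2S_1 > f^4$, so again the minimum of $h$ on $[0,2]$ is at an endpoint. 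The main obstacle I foresee is the tight Cauchy--Schwarz/AM--GM chain producing $h(2)\ge 0$: any slacker bounding loses a critical factor of two and no longer suffices to absorb $\nu^2/2$.
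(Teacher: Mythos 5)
Your proof is correct and takes essentially the same route as the paper: both derive the identical quadratic $\tr(\Gamma^2)=2f^4+2f_4^4+\kappa_4^2S_2+4\kappa_4 S_1$ in the excess kurtosis (your $S_1,S_2$ are the paper's $L_2,L_1$) and then establish the bound by elementary endpoint analysis of a comparison quadratic over $\nu\in[0,2]$. The only cosmetic differences are that the paper subtracts $(2f^4+2f_4^4)\nu^2/4$ instead of $\nu^2f^4/2$, which makes its quadratic concave and removes the need for your vertex-location case split, and it justifies the $\nu=0$ endpoint by noting the expression is a sum of squares (equivalently, $\left\|\Gamma^{(G)}-2D\right\|_F^2\ge 0$) rather than by your Cauchy--Schwarz/AM--GM chain.
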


To apply Theorem \ref{thm:true_sigma} on the random vectors $\vW = W(X)$; see (\ref{eq:M20813p}), we will need to find bounds $K_\delta^W$ and $D_\delta^W$ so that
\begin{align}\label{cond_quad_form}
\E\left|{\rBR{\vW}_2^2-\tr\rbR{\E\rbr{\vW\vW^T}}\over f_\vW}\right|^{2+\delta}&\le (K^W_\delta)^{2+\delta}, \\
\label{cond2_quad_form}
\E\left|{\vW^T\vW^*\over f_\vW}\right|^{2+\delta}&\le (D^W_\delta)^{2+\delta}.
\end{align}

By Lemma \ref{lem:denominator} and Theorem \ref{thm:verify_cond_quad} below, if $\xi_i$'s are not Bernoulli(1/2), we can have explicit bounds for $K_\delta^W$ and $D_\delta^W$. 

\begin{thm}\label{thm:verify_cond_quad}
Let $\vW_i = \vW(A\vxi_i)$. Suppose $\RBR{\xi_1}_{4 q} < \infty$, where $q = 2+\delta$ and $\delta > 0$. Let $\bar C_\delta = 2(4 q  \|\xi_1^2\|_{2q})^{2 q}$ and $\bar D_\delta = (4 q)^q \RBR{\xi_{1}}_{2q}^{2q} +(2q)^{2q}  \RBR{\xi_{1}}_{2q}^{4q}$.
Then
\begin{align}\label{numerator_cond_1}
\E\rBR{\rBR{\vW_1}_2^2-\tr\rbR{\E\rbr{\vW_1 \vW_1^T}}}^{2+\delta}&\le \bar C_\delta \rbr{ f_1f}^{2+\delta},\\
\label{numerator_cond_2}
\E\left|\vW_1^T\vW_2\right|^{2+\delta}&\le \bar D_\delta f^{4+2\delta}.
\end{align}
Thus if $\nu>0$, let $\theta = \min(2, \nu)/\sqrt 2$, then (\ref{cond_quad_form}) and (\ref{cond2_quad_form}) hold with $K^W_{\delta} = (\bar C_\delta/\theta)^{1/q} f_{1}/f$ and $D^W_{\delta} =  \bar D_\delta/\theta$, respectively. 
\end{thm}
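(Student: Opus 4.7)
The plan is to reduce both bounds to the martingale/Burkholder argument already exploited in the proof of Proposition \ref{verify_cond}, applied to quadratic forms in the i.i.d.\ entries of $\vxi$. The algebraic identities (\ref{eq:M210755p}) are the starting point: writing $B = A^T A$, we have $X_i^T X_i = \vxi_i^T B \vxi_i$ and $X_i^T \Sigma X_i = \vxi_i^T B^2 \vxi_i$, with $\tr(B^k) = \tr(\Sigma^k)$ so that $\|B\|_F^2 = f^2$ and $\|B^2\|_F^2 = \tr(\Sigma^4) = f_4^4 \le f^4$; the last inequality $f_4 \le f$ is the monotonicity of $\ell^r$-norms on finite sequences and will be essential.

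For (\ref{numerator_cond_1}), the identity $\tr(\E \vW_1 \vW_1^T) = \E\|\vW_1\|_2^2 = \E[(X_1^T X_1)^2] - f^2$ combined with (\ref{eq:M210755p}) gives
\[
\|\vW_1\|_2^2 - \tr(\E \vW_1 \vW_1^T) = \bigl[(X_1^T X_1)^2 - \E(X_1^T X_1)^2\bigr] - 2\bigl[X_1^T\Sigma X_1 - f^2\bigr].
\]
Setting $Y = X_1^T X_1 - f_1$ (mean zero), the first bracket equals $Y^2 - \E Y^2 + 2 f_1 Y$. Applying the martingale inequality (\ref{eq:M210757p}) at indices $r \in \{q, 2q\}$ (the moment hypothesis $\|\xi_1\|_{4q} < \infty$ is precisely what licenses $r = 2q$) gives $\|Y\|_r \lesssim r \|\xi_1^2\|_r f$, so $\|Y^2 - \E Y^2\|_q \le 2\|Y\|_{2q}^2 \lesssim q^2 \|\xi_1^2\|_{2q}^2 f^2$ while $2 f_1 \|Y\|_q \lesssim q f_1 \|\xi_1^2\|_q f$. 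The second bracket is the same decomposition with $B$ replaced by $B^2$, yielding $\|X_1^T \Sigma X_1 - f^2\|_q \lesssim q\|\xi_1^2\|_q \|B^2\|_F = q\|\xi_1^2\|_q f_4^2 \le q\|\xi_1^2\|_q f^2$. Since $f \le f_1$, every contribution is $\lesssim q^2 \|\xi_1^2\|_{2q}^2 f_1 f$, and raising to the $q$-th power produces (\ref{numerator_cond_1}) with constant $\bar C_\delta$.

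For (\ref{numerator_cond_2}), the second identity in (\ref{eq:M210755p}) splits $\vW_1^T \vW_2$ into $(X_1^T X_2)^2$, the two previously handled forms $X_i^T\Sigma X_i - f^2$ ($i=1,2$), and an $f^2$ constant. The dominant piece satisfies $\|(X_1^T X_2)^2\|_q = \|X_1^T X_2\|_{2q}^2$, and the conditional Burkholder chain (\ref{eq:M21827}) with $q$ replaced by $2q$ gives $\|X_1^T X_2\|_{2q} \lesssim q \|\xi_1\|_{2q}^2 f$, hence $\|(X_1^T X_2)^2\|_q \lesssim q^2 \|\xi_1\|_{2q}^4 f^2$. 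Summing and taking the $q$-th power yields (\ref{numerator_cond_2}) with $\bar D_\delta$ as stated. The final claim follows by dividing (\ref{numerator_cond_1}) and (\ref{numerator_cond_2}) by $f_\vW^q$ and invoking Lemma \ref{lem:denominator}, which under $\nu > 0$ gives $f_\vW \ge \theta f^2$.

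The main technical obstacle is the $X_1^T\Sigma X_1 - f^2$ term: the raw martingale bound naturally involves $\|B^2\|_F = f_4^2$ rather than $f^2$, and reducing $f_4^2$ to $f^2$ requires the (non-trivial but classical) $\ell^r$-monotonicity inequality $f_4 \le f$, valid because $\Sigma$ is PSD. A second, more mundane challenge is that the expansion of $(X_1^T X_1)^2 - \E(X_1^T X_1)^2$ produces both a pure $f^2$ contribution (from $Y^2$) and a cross term of order $f_1 f$ (from $2 f_1 Y$); expressing everything under the uniform product $f_1 f$ in (\ref{numerator_cond_1}) needs the inequality $f \le f_1$. Beyond these, the proof is a careful accounting of constants across multiple applications of Burkholder's inequality at indices $q$ and $2q$, which is precisely why the moment hypothesis must be $\|\xi_1\|_{4q} < \infty$ rather than the weaker $\|\xi_1\|_{2q} < \infty$ used in Proposition \ref{verify_cond}.
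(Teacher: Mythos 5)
Your proof is correct and takes essentially the same route as the paper's: both start from the identities (\ref{eq:M210755p}), center via $(U^TU)^2=(U^TU-f_1)^2+2f_1(U^TU-f_1)+f_1^2$, apply the Burkholder/martingale bounds (\ref{eq:M210757p}) and (\ref{eq:M21827}) at indices $q$ and $2q$ (which is why $\|\xi_1\|_{4q}<\infty$ is needed), use $\tr(B^4)=f_4^4\le f^4$ and $f\le f_1$ to express everything in terms of $f_1 f$ and $f^2$, and invoke Lemma \ref{lem:denominator} to pass to $K^W_\delta$ and $D^W_\delta$. The only differences are expository (you make the steps $\tr(\E W_1W_1^T)=\E[(X_1^TX_1)^2]-f^2$, $f_4\le f$, and $f\le f_1$ explicit where the paper leaves them implicit).
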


\begin{rmk}
{\rm
A careful check of the proof of Theorem \ref{thm:verify_cond_quad} indicates that (\ref{numerator_cond_2}) holds under the milder moment condition $\xi_i \in {\cal L}^{4+2\delta}$. Instead of using $T_n$ in (\ref{eq:M20818p}), in view of (\ref{eq:M220648p}) we introduce the following quantity
\begin{align}\label{eq:M220650p}
\tilde T_{n}= {1\over{n(n-1)}} \sum_{i \not= i'} \sum_{j, k \le p}
 (X_{i j} X_{i k} -\sigma_{j k})   (X_{i' j} X_{i' k} -\sigma_{j k})
\end{align}
By (\ref{eq:M220656p}), under $\xi_i \in {\cal L}^{4+2\delta}$, we have
\begin{eqnarray*}
\sup_t\left|\P(n \tilde T_n \le f_W t)
      -\P\left( \sum_{a \in {\cal A}} { {\theta_a} \over f_W} (\eta_a-1) \le t \right)\right|
 = O(n^{- \delta/(10+4\delta)}),
\end{eqnarray*}
where $\theta_a$ are eigenvalues of $\Gamma$ and $\eta_a$ are i.i.d. $\chi^2_1$. \citet{MR2724863} consider testing the hypothesis $H_0: \Sigma = {\rm Id}_p$ vs $H_1: \Sigma \not= {\rm Id}_p$. They obtained a central limit theorem for a test statistic closely related to $\tilde T_{n}$ under the stronger moment assumption that $\eta_i$ has finite $8$th moment and $f_4 / f \to 0$. Our results relaxes the moment condition and can lead to a non-central limit theorem in that the asymptotic distribution may not be Gaussian. Additionally we have the rate of convergence of the approximate distribution. 
}
\end{rmk}

\section{A simulation study}
In this section we will provide a simulation study for the finite sample performances of the invariance principle Theorem \ref{thm:true_sigma}, the plug-in and the subsampling procedures described in Sections \ref{sec:O230147p} and \ref{sec:O230149p}, respectively. We consider the following two data generating models.
 
{\bf Model 1 (Linear Process):} Let  $\xi_{i,k}, i, k\in \mathbb Z$ are i.i.d. Student $t_5$; let
\begin{eqnarray}\label{eq:N080228p}
X_{i,j}=\sum_{k=0}^\infty (k+1)^{-\beta}\xi_{i,j-k}, \mbox{ where } \beta > 1/2.
\end{eqnarray}
If $\beta<1$, then the process $(X_{i, j})_j$ is long memory, thus having strong cross-sectional dependence. In our simulations we choose $p=200$ and $n=50, 200$ and truncate the sum in (\ref{eq:N080228p}) to $\sum_{k=0}^{2000}$, and choose two levels of $\beta$: $\beta=2$ and $\beta=0.6$, which correspond to short and long memory, respectively.

{\bf Model 2 (Factor Model):} Let 
\begin{eqnarray}
X_{i,j}=\sqrt{4+U_i^2}\xi_{i,j}+a (2Z_i+ Z_i^2-1), \, 1\le i \le n, 1\le j \le p,
\end{eqnarray}
where $U_i\sim Uniform[-1,1]$, $\xi_{i,j}, Z_i\sim N(0,1)$ and they are all independent. We consider two cases: $a = 0.05$ and $a = 0.5$, which imply weak and strong factors, respectively. We also let $p=200$ and $n=50, 200$.

We shall use QQ plots to measure the closeness of the approximations. Recall (\ref{eq:A291103p}) for $V$. Figures \ref{fig:srd}(a)-\ref{fig:nonlinearstrong}(a) show the QQ plots of the distributions of $R_n$ and $V$. In the literature majority of papers deal with central limit theorems for $R_n$. The normal QQ plots in Figures \ref{fig:srd}(b)-\ref{fig:nonlinearstrong}(b) indicate that the Gaussian approximation of $R_n$ can be quite bad if the cross-sectional dependence (among entries of $X_1$) is strong, see for example Model 1 with $\beta = 0.6$ and Model 2 with $a = 0.5$. In Figures \ref{fig:srd}(c)-\ref{fig:nonlinearstrong}(c), we make QQ plots for $\hat V$ vs $\hat R_n$. Here $\hat V = \sum_{j=1}^p \hat f^{-1} \hat \lambda_j (\eta'_j-1)$, where $\eta'_j$ are i.i.d. $\chi^2_1$ random variables that are independent of $\X_{n}$ and $\hat \lambda_j$ are eigenvalues of the sample covariance matrix $\hat \Sigma =  (n-1)^{-1} \sum_{i=1}^n (X_i - \bar X_n) (X_i - \bar X_n)^T$, and $\hat R_n = (n |\bar X_n|_2^2 - \hat f_1) / f^\dagger$, where $\hat f_1 = {\rm tr}(\hat \Sigma)$, and $f^\dagger = [{\rm tr} (\hat \Sigma^2)-\hat f_1^2/n]^{1/2}$; see \cite{MR1399305}. To obtain (c), the following steps are repeated for $N=100$ times: in each realization, data is generated according to the above models. Then given $\hat \Sigma$, we obtain $K=100$ realizations of $\hat V$ by generating $100 p$ i.i.d. $\chi^2_1$ r.v. $\eta_j'$. Figures \ref{fig:srd}(c)-\ref{fig:nonlinearstrong}(c) suggest that, for the plug-in procedure, larger $n$ leads to better approximations.  Figures \ref{fig:srd}(d)-\ref{fig:nonlinearstrong}(d) show the subsampling procedure (cf. Theorem \ref{th:O240914a}(ii)). As in (c), we perform in (d) the QQ plots of $N=100$ repetitions of $n |\bar X_n|_2^2$ and the subsample values $m(1-m/n)^{-1}|\bar X_{A_j}-\bar X|_2^2$ with $J=100$ and $m = \lfloor n/\log n\rfloor$. The subsampling distribution provides an excellent approximation of the distribution of $n |\bar X_n|_2^2$. For the subsampling approach one needs to choose an $m$. In our simulation study for other models (not reported here) with bounded $K_\delta$ and $D_\delta$, the rule-of-thumb choice $m = \lfloor n/\log n\rfloor$ can often have a satisfactory performance. We leave it as a future problem on designing a data-driven choice of $m$.

\begin{figure}[htbp] 
   \centering
   \includegraphics[width=6in]{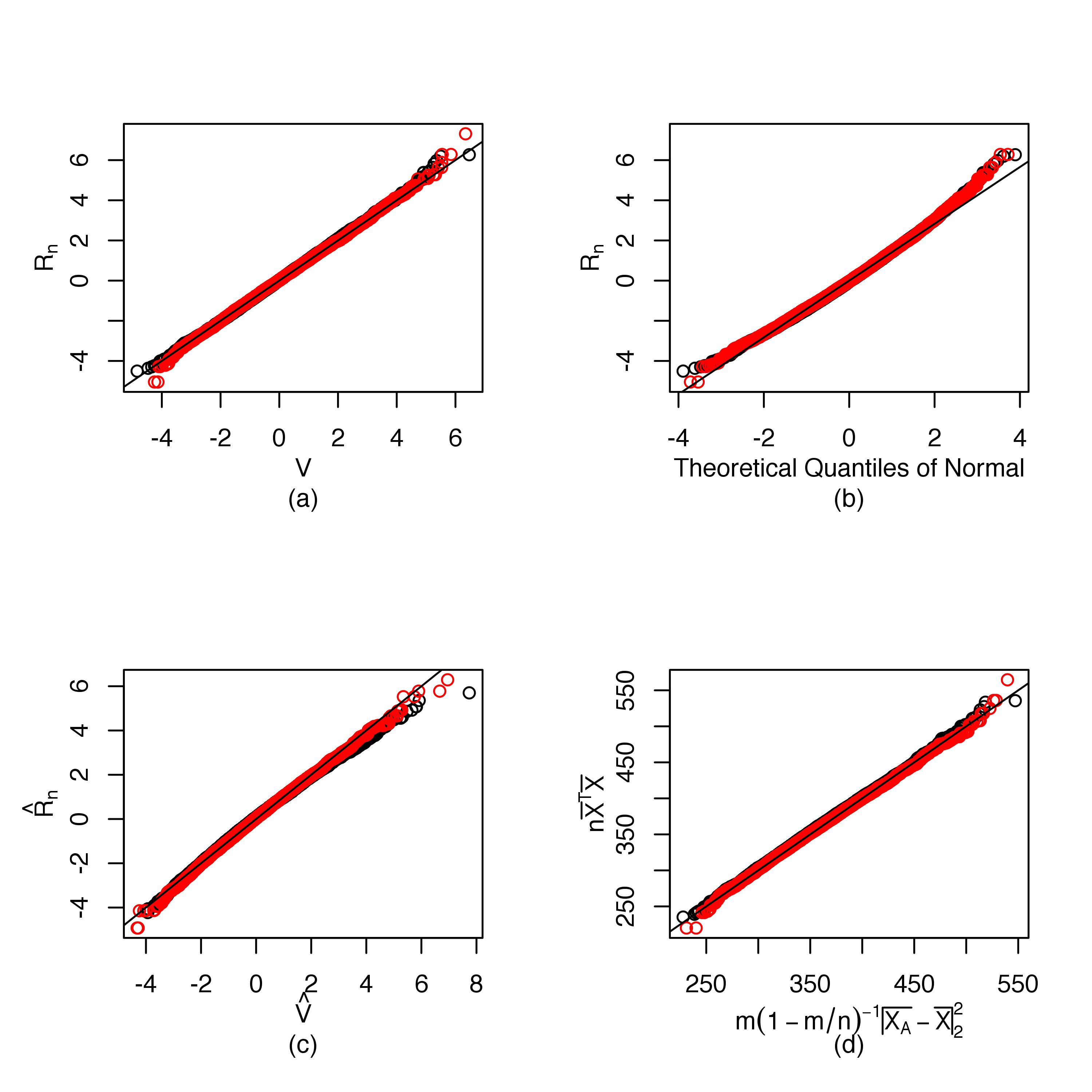} 
   \caption{Model 1 with $\beta=2$. (a) QQ-plot of $V$ v.s. $R_n$ (cf. Theorem \ref{thm:true_sigma}); (b) QQ-normal plot of $R_n$; (c) QQ-plot of $\hat V$ v.s. $\hat R_n$; (d) QQ-plot of the subsampling distribution v.s. $n |\bar X|_2^2$ (cf. Theorem \ref{th:O240914a}(ii)). Red: $n = 200$; black: $n = 50$.}
   \label{fig:srd}
\end{figure}
\begin{figure}[htbp] 
   \centering
   \includegraphics[width=6in]{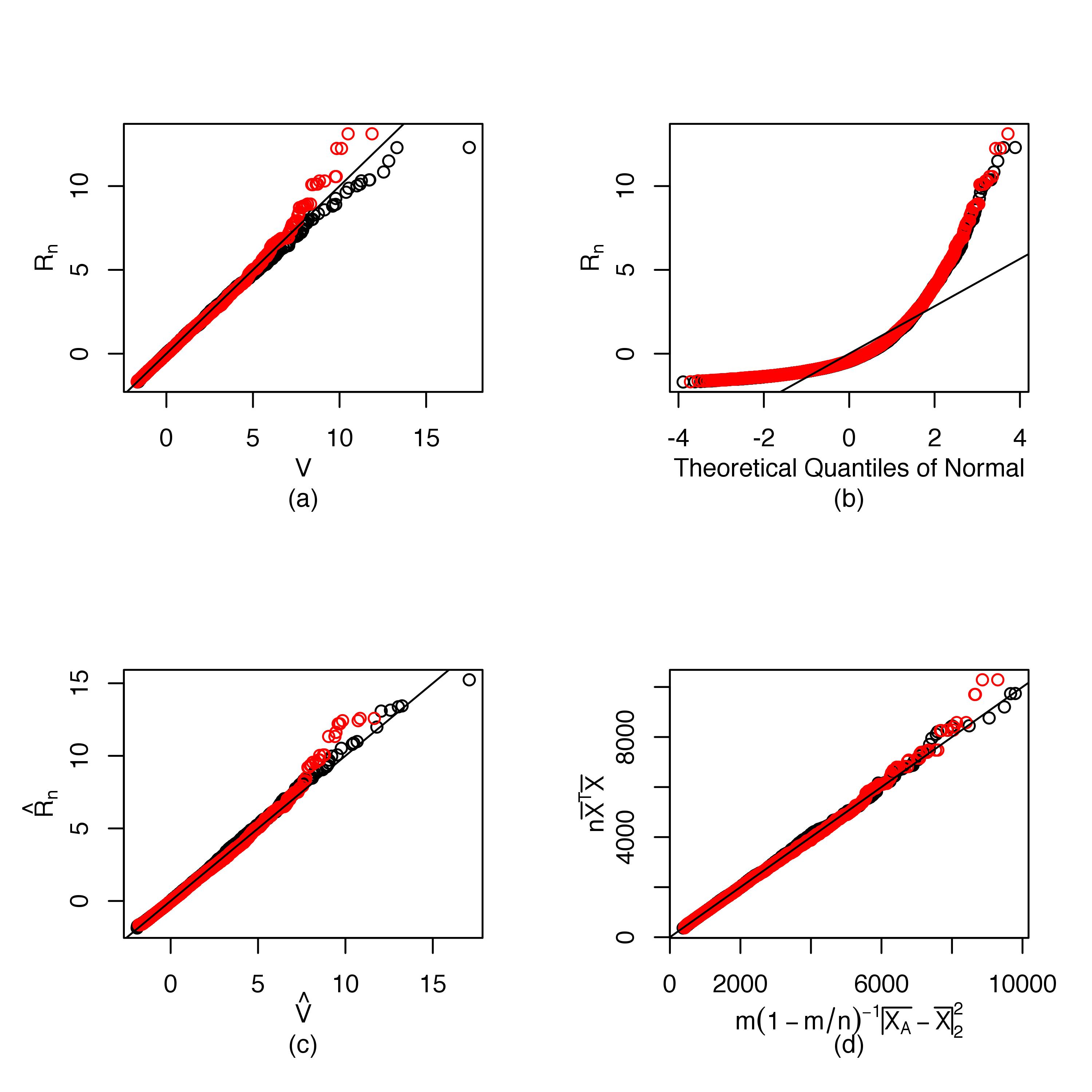} 
   \caption{Model 1 with $\beta=0.6$.  See Figure \ref{fig:srd} for the caption.}
   \label{fig:lrd}
\end{figure}
\begin{figure}[htbp] 
   \centering
   \includegraphics[width=6in]{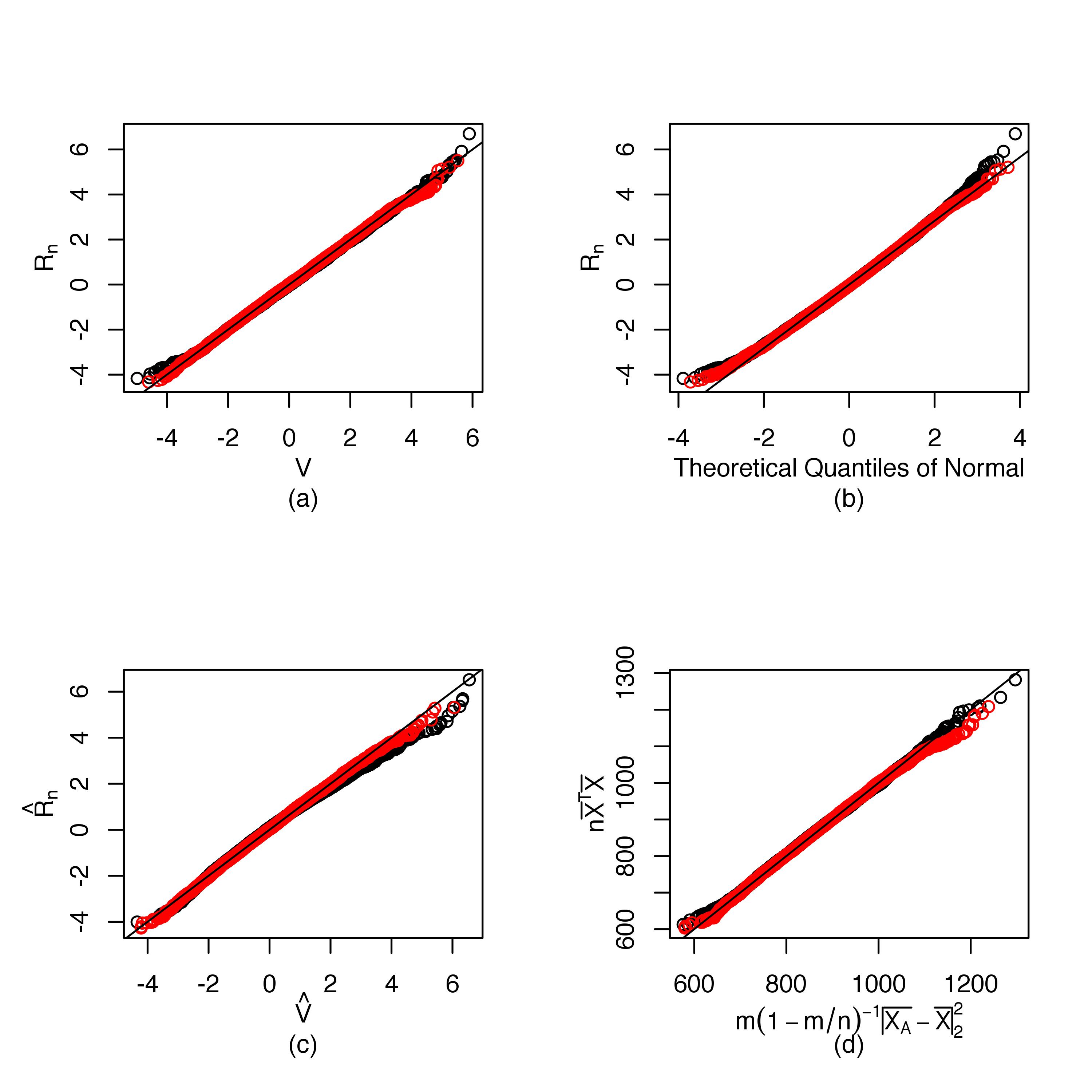} 
   \caption{Model 2 with $a = 0.05$. See Figure \ref{fig:srd} for the caption.}
   \label{fig:nonlinearweak}
\end{figure}
\begin{figure}[htbp] 
   \centering
   \includegraphics[width=6in]{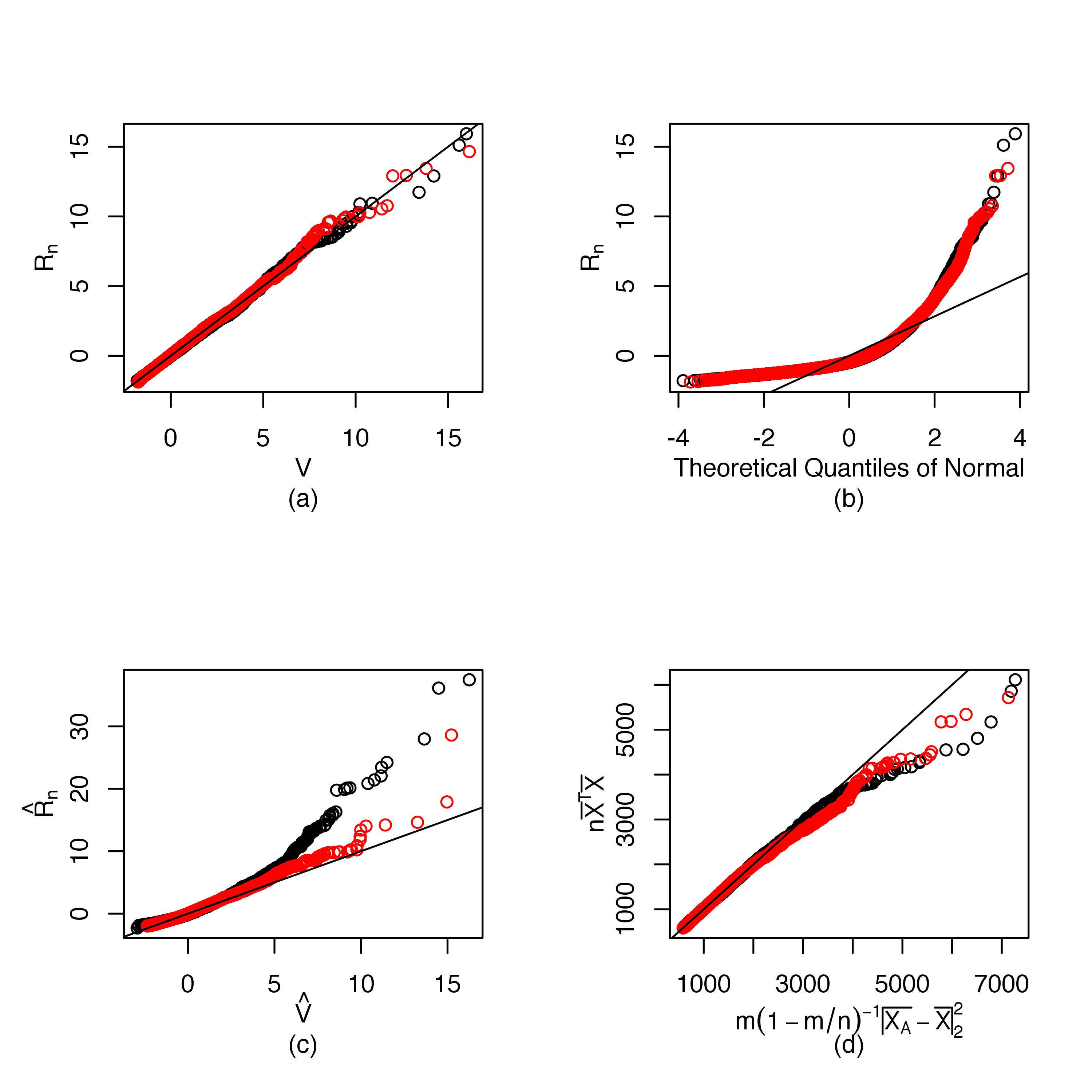} 
   \caption{Model 2 with $a = 0.5$. See Figure \ref{fig:srd} for the caption.}
   \label{fig:nonlinearstrong}
\end{figure}

\section{Proof}
\label{sec:proof}

\begin{proof}[Proof of Proposition \ref{prop:cond_Y}]
Note that $\vxi=\Lambda^{-1/2}Q^T\vY_1 \sim N(\vzero, {\rm Id}_p)$. Then $\vY_1^T \vY_1 = \sumj \lambda_{j}\xi_j^2$, where $\xi_j$ are entries of $\vxi$ and are i.i.d. $N(0, 1)$. Let $q = 2 + \delta$. By Burkholder's inequality (\citet{MR1476912}), 
\begin{eqnarray*}
\RBR{\vY^T\vY-f_1}_q^2 \le (q-1)  \sum_{j=1}^p \lambda_j^2 \|\xi^2_j-1\|_q^2.
\end{eqnarray*}
Then  (\ref{cond_1Y}) holds. Let $\vzeta=\Lambda^{-1/2}Q^T\vY_2$. Then $\vY_1^T \vY_2 = \sumj \lambda_{j}\xi_j\zeta_j$ and (\ref{cond_2Y}) similarly follows. 
\end{proof}

In Lemma \ref{lem:g_approx} and in the proof of Theorem \ref{thm:true_sigma}, we define 
\begin{eqnarray}\label{eq:M01725p}
g_0(u) = (1- \min(1, \max(u, 0))^4)^4.
\end{eqnarray}
Any non-increasing function $g_0(\cdot)$ with $g_0(\cdot) \in \mathbb{C}^3$, $g_0(u) = 1$ if $u \le 0$, and $g_0(u)=0$ if $u \ge 1$, will meet our requirements. To make the calculations explicit, we can choose $g_0$ in the form of (\ref{eq:M01725p}). Then
\begin{align}\label{eq:M01726p}
g_* = \max_u [|g'_0(u)| + |g''_0(u)| + |g'''_0(u)|] < \infty.
\end{align}
\begin{proof}[Proof of Theorem \ref{thm:true_sigma}]
Let $\vY_i \in \mathbb{R}^p$ be i.i.d. $N(\mathbf{0}, \Sigma)$ random vectors and $\vYbar = \sumn\vY_i / n$. Then $\sum_{j=1}^p {\lambda_j} \eta_i$ and $n |\bar Y_n|_2^2$ are identically distributed. Note that $f_1 =  \sum_{j=1}^p {\lambda_j}$. Hence, to show (\ref{eq:A291103p}), since $L_\delta(n, \psi)$ is increasing in $\psi$, it suffices to prove the following relation holds for every $\psi$:
\begin{align}\label{gar2}
\sup_t\left|\P\left( R_n \le t\right)-\P\left( R^\diamond_n \le t\right)\right|= O(L_\delta(n, \psi)+\psi^{-1/2}),
\end{align}
where $R^\diamond_n$ is the Gaussian version of $R_n$ in (\ref{eq:M180141p}):
\begin{eqnarray}
\label{eq:A300741p}
R^\diamond_n = { {n |\bar Y_n|_2^2 - f_1} \over f}.
\end{eqnarray}

Recall (\ref{eq:M01725p}) for $g_0$. We first approximate the indicator function $h(x)=\ind{x\le t}$ the $\mathbb{C}^3$ function $\g(x)=g_0(\psi(x-t))$ for $t$ fixed. By (\ref{eq:M01726p}), 
\begin{align*}
&\ind{x\le t}\le \g(x)\le \ind{x\le t+\psi^{-1}},\\
\quad \sup_{x,t}|\g'(x)|& \le g_* \psi,\,\, \sup_{x,t}|\g''(x)| \le g_* \psi^2,\,\,\sup_{x,t}|\g'''(x)| \le g_* \psi^3.
\end{align*}
Then $\P\left(R_n \le t\right) \le \E\g\left(R_n \right)$. By Lemma \ref{lem:g_approx}, 
\begin{eqnarray}\nonumber
\E\g\left( R_n \right)
&\le&\E\g\left( R^\diamond_n \right)+CL_\delta(n,\psi)\\\label{right_side}
&\le&\P\left( R^\diamond_n \le t+\psi^{-1}\right)+CL_\delta (n,\psi).
\end{eqnarray}
The reverse direction is similar: by applying Lemma \ref{lem:g_approx} again, we have
\begin{eqnarray}
\label{left_side}
\P\left(R_n \le t\right) \ge \P\left( R^\diamond_n \le t-\psi^{-1}\right) - C L_\delta(n,\psi).
\end{eqnarray}
By (\ref{right_side}), (\ref{left_side}) and (\ref{holder}) in Lemma \ref{lem:density}, we have (\ref{gar2}).
\end{proof}


\begin{lem}\label{lem:g_approx}
Assume (\ref{cond_1}) and (\ref{cond_2}). Let $\tilde K_\delta$ and $\tilde D_\delta$ be specified as in Theorem \ref{thm:true_sigma}. Let $\g(x) = g_0(\psi(x-t))$, where $g_0(\cdot)$ is given by (\ref{eq:M01725p}). Recall (\ref{eq:A300741p}) for $R_n$ and $R^\diamond_n$. Then we have
\begin{eqnarray}\label{eq:A30814p}
\sup_t\left|\E\g\left( R_n \right)-\E\g\left(R^\diamond_n \right)\right|
  = O[L_\delta(n, \psi)].
\end{eqnarray}
\end{lem}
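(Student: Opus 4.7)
The plan is a Lindeberg-type swap interpolating between the $X$- and $Y$-samples. Define the hybrid samples $Z^{(k)} = (Y_1, \ldots, Y_k, X_{k+1}, \ldots, X_n)$ for $k = 0, \ldots, n$, so $Z^{(0)}$ yields $R_n$ while $Z^{(n)}$ yields $R_n^\diamond$. Write the statistic computed from $Z^{(k-1)}$ and $Z^{(k)}$ as $S^{(k)} + c_k^X$ and $S^{(k)} + c_k^Y$, where $S^{(k)}$ depends only on the $n-1$ remaining vectors and, with $W_j$ the $j$-th column of $Z^{(k)}$ and $\beta_k = (2/n)\sum_{j\ne k} W_j$,
\begin{equation*}
 c_k^X = \frac{\alpha_k^X}{n} + \gamma_k^X, \quad \alpha_k^X = \frac{X_k^T X_k - f_1}{f}, \quad \gamma_k^X = \frac{X_k^T\beta_k}{f},
\end{equation*}
and symmetrically for $c_k^Y$. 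The target difference then telescopes as $\E g(R_n) - \E g(R_n^\diamond) = \sum_{k=1}^n[\E g(S^{(k)} + c_k^X) - \E g(S^{(k)} + c_k^Y)]$.

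Let $\mathcal{G}_k$ denote the $\sigma$-algebra generated by all vectors other than $X_k, Y_k$. Taylor expand $g$ around $S^{(k)}$ to second order. Since $\E[c_k^X \mid \mathcal{G}_k] = \E[c_k^Y \mid \mathcal{G}_k] = 0$, the first-order terms cancel. Decomposing $(c_k^X)^2 - (c_k^Y)^2$ and taking $\E[\cdot \mid \mathcal{G}_k]$ splits the second-order contribution into (i) a diagonal piece $n^{-2}[\E(\alpha_k^X)^2 - \E(\alpha_k^Y)^2]$ bounded by $\tilde K_0^2/n^2$; (ii) a cross piece $(2/n)\E[\alpha_k^X \gamma_k^X \mid \mathcal{G}_k]$, which vanishes on the Gaussian side by sign symmetry and for $X$ is controlled via Cauchy--Schwarz by $2K_0\sqrt{\beta_k^T \Sigma \beta_k}/(nf)$; and (iii) the quadratic-in-$\beta_k$ piece $\E[(\gamma_k^X)^2 - (\gamma_k^Y)^2 \mid \mathcal{G}_k]$, which equals $0$ because $X$ and $Y$ share the covariance $\Sigma$. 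Using $\|g''\|_\infty \le g_* \psi^2$, $\E \beta_k^T \Sigma \beta_k \le 4f^2/n$, and summing over $k$, the total quadratic contribution is $O(\psi^2[\tilde K_0^2/n + \tilde K_0/n^{1/2}])$, matching the first bracket in $L_\delta(n,\psi)$.

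For the Taylor remainder $R_2^{(k)}$ I will use the interpolation bound $|R_2^{(k)}| \le C \min(\|g''\|_\infty c_k^2, \|g'''\|_\infty |c_k|^3) \le C g_*\psi^q |c_k|^q$, valid for $q = 2+\delta \in (2,3]$ since $g_0 \in \mathbb{C}^3$. The diagonal piece of $c_k^X$ contributes $\sum_k n^{-q}\E|\alpha_k^X|^q \le \tilde K_\delta^q/n^{q-1}$. For $\gamma_k^X = X_k^T\beta_k/f$, I condition on $X_k$: then $(X_k^T W_j)_{j \ne k}$ are independent zero-mean random variables with conditional variance $X_k^T\Sigma X_k$ and conditional $q$-th moment $\E[|X_k^T W_1|^q \mid X_k]$, so Rosenthal's inequality gives
\begin{equation*}
 \E|X_k^T \beta_k|^q \le C_q\, 2^q n^{-q}\bigl[(n-1)^{q/2}\,\E(X_k^T\Sigma X_k)^{q/2} + (n-1)\,\E|X_k^T W_1|^q\bigr].
\end{equation*}
Dividing by $f^q$, bounding $\E|X_k^T W_1|^q/f^q \le \tilde D_\delta^q$ via (\ref{cond_2}) whether $W_1$ is $X$-type or $Y$-type, and summing over $k$ yields precisely the two remaining terms $\E(X_1^T\Sigma X_1)^{q/2}/(n^{\delta/2} f^q)$ and $\tilde D_\delta^q/n^\delta$ of $L_\delta$.

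The main obstacle is the cross piece (ii): it is a third-moment object that is nonzero for general $X$, and to preserve the sharper $\psi^2$ prefactor one must bound it with second moments rather than absorb it into the $\psi^q$ remainder. Cauchy--Schwarz conditional on $\mathcal{G}_k$ accomplishes this using only $K_0$ and the $\beta_k$-variance estimate, producing the $\tilde K_0/n^{1/2}$ contribution. Collecting the quadratic contribution, the Rosenthal-based remainder bound, and the analogous estimates on the Gaussian side yields (\ref{eq:A30814p}).
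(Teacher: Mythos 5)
Your proposal is correct and follows essentially the same route as the paper's proof: a Lindeberg swap along hybrid samples, a second-order Taylor expansion whose first-order term vanishes by matching means, a three-way split of the second-order term (quadratic piece killed by matching covariances, Gaussian cross term killed by symmetry, non-Gaussian cross term via Cauchy--Schwarz giving the $\tilde K_0/n^{1/2}$ contribution), and a Rosenthal-plus-interpolation bound on the remainder producing the $\psi^q$ terms of $L_\delta$. The only cosmetic difference is your remainder interpolation $\min(\|\g''\|_\infty c^2,\|\g'''\|_\infty|c|^3)\le g_*\psi^q|c|^q$ versus the paper's $\min(1+|\I|+|\II|,\,g_*\psi^3(|\Delta_i|^3+|\Gamma_i|^3))$, and the minor point that for $Y$-type columns the bound $\E|X_k^TW_1|^q\le C\tilde D_\delta^q f^q$ comes from the Gaussian moment identity together with Jensen (as in (\ref{eq:O311227p})) rather than directly from (\ref{cond_2}).
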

\begin{proof}[Proof of Lemma \ref{lem:g_approx}]
Let $H_i = \sum_{j=1}^{i-1} \vX_j + \sum_{j=i+1}^n \vY_j$ and
\begin{align*}
&L_i=\frac{H_i^TH_i-(n-1)f_1}{nf},\\
&\Delta_i=\frac{2H_i^T\vX_i+\vX_i^T\vX_i-f_1}{nf},\\
&\Gamma_i=\frac{2H_i^T\vY_i+\vY_i^T\vY_i-f_1}{nf}.
\end{align*}
Note that $H_i$ is independent of $\vX_i$ and $\vY_i$. Let
\begin{align*}
\I&= \g'(L_i)(\Delta_i-\Gamma_i),\\
\II&=\frac{1}{2}\g''(L_i)(\Delta_i^2-\Gamma_i^2),\\
\III&=\left[\g\left(L_i+\Delta_i\right)-\g\left(L_i+\Gamma_i\right)\right]-\I-\II.
\end{align*}
Note that $\vX_i$ and $\vY_i$ both have mean $\mathbf{0}$ and covariance matrix $\Sigma$. Then
\begin{align*}
\E\I&=\E\E\left[\g'(L_i)(\Delta_i-\Gamma_i)\big|\vX_i,\vY_i\right]\\
&=\frac{1}{nf}\E\left[2(\vX_i^T-\vY_i^T)\E(\g'(L_i)H_i)+(\vX_i^T\vX_i-\vY_i^T\vY_i)\E \g'(L_i)\right] = 0.
\end{align*}
For $\II$, by (\ref{eq:M01726p}), $|\g''(u)| \le g_* \psi^2$. Then for $C_1 = g_* / 2$,
\begin{align*}
\rBR{\E \II}& = \rBR{\frac{1}{2}\E[\g''(L_i)(\Delta_i^2-\Gamma_i^2)]}\\
&=\frac{1}{2} \rBR{\E\rbR{\g''(L_i)\E\rbr{\cond{\Delta_i^2-\Gamma_i^2}H_i}}}\\
&\le C_1 \psi^2\E\rBR{\E\rbr{\left.\Delta_i^2-\Gamma_i^2\right|H_i}}.
\end{align*}
The term $n^2f^2\E\rbr{\left.\Delta_i^2-\Gamma_i^2\right|H_i}$ can be decomposed into
\begin{align*}
&4 \E\rbR{\cond{H_i^T\vX_i\vX_i^TH_i-H_i^T\vY_i\vY_i^TH_i}H_i}
+\E\rbR{(\vX_i^T\vX_i-f_1)^2-(\vY_i^T\vY_i-f_1)^2}\\
&\qquad+4\E\rbR{\cond{H_i^T\vX_i(\vX_i^T\vX_i-f_1)-H_i^T\vY_i(\vY_i^T\vY_i-f_1)}H_i},
\end{align*}
where $\E\rbr{\cond{H_i^T\vX_i\vX_i^TH_i-H_i^T\vY_i\vY_i^TH_i}H_i}=0$. By (\ref{cond_1}),
\begin{align*}
\E\left|(\vX_i^T\vX_i-f_1)^2-(\vY_i^T\vY_i-f_1)^2\right| \le f^2 (K_{0}^2+c_{0}^2) \le f^2 \tilde K_{0}^2.
\end{align*}
Since $Y_i$ is Gaussian, $\E\rbR{\cond{H_i^T\vY_i(\vY_i^T\vY_i-f_1)}H_i}=0$. By the Cauchy-Schwarz inequality and (\ref{cond_1}), since $\| H_i^T \vX_i \|^2 = (n-1)\tr(\Sigma^2) = (n-1) f^2$,
\begin{eqnarray*}
 \E\rBR{\E\rbr{\left.\Delta_i^2-\Gamma_i^2\right|H_i}}
 &\le& { {\tilde K_{0}^2 } \over n^2}
  + { {\E |\E [\cond{H_i^T\vX_i(\vX_i^T\vX_i-f_1)}H_i ] |} \over {n^2 f^2} }\cr
  &\le& { {\tilde K_{0}^2} \over n^2}
  + { {\| H_i^T\vX_i \|
   \| \vX_i^T\vX_i-f_1 \|   } 
     \over {n^2 f^2} } \cr
 & \le& { { \tilde K_{0}^2} \over n^2} 
  + { {K_{0} } \over {n^{3/2} } }.
\end{eqnarray*}
So
\begin{align}\label{eq:A30816p}
\rBR{\E\II}\le C\psi^2 (n^{-2}\tilde K_{0}^2+n^{-3/2}\tilde K_{0}).
\end{align}
Since $0\le g(t)\le 1$ for all $t$, and  $|\g'''(u)| \le g_* \psi^3$. We have that
\begin{align*}
\E\rBR{\III}
&\le\E\min\rBr{1+|\I|+|\II|, g_* \psi^3 (|\Delta_i|^3+|\Gamma_i|^3)}\\
& \le C\E\min\rBr{1+\psi(|\Delta_i|+|\Gamma_i|)+\psi^2(|\Delta_i|^2+|\Gamma_i|^2),\psi^3(|\Delta_i|^3+|\Gamma_i|^3)}\\
&\le C\psi^q (\E\rBR{\Delta_i}^q+\E\rBR{\Gamma_i}^q),
\end{align*}
where $q = 2+\delta$. Let $\vx \in {\mathbb{R}^p}$ be a fixed vector. By Rosenthal's inequality,
\begin{eqnarray}\label{eq:A30911p}
\E \left| H_i \vx\right|_q^q \le c_q [m \|X_1^T\vx \|_q^q
 + (n-m) \|Y_n^T \vx \|_q^q
 + n^{q/2} (\vx^T \Sigma \vx)^{q/2} ],
\end{eqnarray}
where $c_q$ and $c_{q,1}, \ldots$ hereafter are constants only depend on $q$ and they may take different values at different appearances. Note that $Y_n^T \vx \sim N(0, \vx^T \Sigma \vx)$. Let $c_{q,1} = \| \xi_1 \|^q_q$, $\xi_1 \sim N(0, 1)$. Then $\E |Y_n^T \vx|^q = c_{q, 1} (\vx^T \Sigma \vx)^{q/2}$ and  
\begin{eqnarray}
\| H_i^T\vX_i \|_q^q \le c_q(n\RBR{X_1^TX_2}_q^q+ n^{q/2}  \E (X_1^T \Sigma X_1)^{q/2})
\end{eqnarray}
Hence by (\ref{cond_2}) and (\ref{cond_2Y}), we have
\begin{eqnarray}
\label{eq:A30935p}
\E|\Delta_i|^q
&\le& C {{\E|H_i^T\vX_i|^q+\E|\vX_i^T\vX_i-f_1|^q} \over {n^q f^q} } \cr
&\le& C {{ n\tilde D_\delta^qf^q+ n^{q/2}  \E (X_1^T \Sigma X_1)^{q/2} + K_\delta^q f^q} \over {n^q f^q} }. 
\end{eqnarray}
By (\ref{eq:A30911p}), $\| H_i^T\vY_i \|_q^q \le c_q (n \E (X_1^T \Sigma X_1)^{q/2}  + n^{q/2} f^q)$, which implies that $\E\rBR{\Gamma_{i}}^q\le c_q ( n \E (X_1^T \Sigma X_1)^{q/2} / (n f)^q + n^{-q/2})$. Observe that $(|H_i+\vX_i|_2^2 - n f_1)/(n f) = L_i+\Delta_i$ and $(|H_i+\vY_i|_2^2 - n f_1)/(n f) = L_i+\Gamma_i$. We write the telescope sum
\begin{align*}
\g\left( R_n \right) - \g\left(R^\diamond_n \right) = \sumn\left[\g(L_i+\Delta_i) - \g(L_i+\Gamma_i)\right],
\end{align*}
which entails (\ref{eq:A30814p}) in view of (\ref{eq:A30816p}), (\ref{eq:A30935p}) and $\E\I = 0$.
\end{proof}

\begin{lem}\label{lem:density}
Let $a_1 \ge \ldots \ge a_p \ge 0$ be such that $\sum_{i=1}^p a_i^2 = 1$; let $\eta_i$ be i.i.d. $\chi^2_1$ random variables. Then for all $h > 0$, 
\begin{align}\label{holder}
\sup_t \P\left(t\le a_1 \eta_1 + \ldots + a_p \eta_p \le t+h\right) \le h^{1/2} \sqrt{4/\pi}.
\end{align}
\end{lem}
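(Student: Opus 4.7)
The plan is to reduce the concentration problem for $S := \sum_{i=1}^p a_i \eta_i$ to a concentration problem for the single summand $a_1 \eta_1$ via conditioning on the remaining coordinates. Write $\eta_i = Z_i^2$ with $Z_i$ i.i.d.\ $N(0,1)$, so that $S \ge 0$ and the supremum over $t \in \mathbb R$ may be restricted to $t \ge 0$. Decompose $S = a_1 Z_1^2 + R$, where $R := \sum_{i \ge 2} a_i Z_i^2 \ge 0$ is independent of $Z_1$. Conditioning on $R$, I use the crucial fact that the density of $a_1 Z_1^2$ on $(0,\infty)$, namely $u \mapsto (2\pi a_1 u)^{-1/2} e^{-u/(2a_1)}$, is strictly decreasing. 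Hence for any shift $u_0 \in \mathbb R$,
\[
\P(u_0 \le a_1 Z_1^2 \le u_0 + h) \;\le\; \P(0 \le Z_1^2 \le h/a_1) \;=\; 2\Phi(\sqrt{h/a_1}) - 1,
\]
and the elementary bound $\Phi(x) - 1/2 \le x\phi(0) = x/\sqrt{2\pi}$ (valid for $x \ge 0$) gives $\le \sqrt{2h/(\pi a_1)}$. Taking expectation over $R$ yields $\sup_t \P(t \le S \le t+h) \le \sqrt{2h/(\pi a_1)}$. Since $\sqrt{2h/(\pi a_1)} \le \sqrt{4h/\pi}$ whenever $a_1 \ge 1/2$, this settles that regime.

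For $a_1 < 1/2$ the preceding bound is too weak and must be supplemented with a two-summand density bound. Using the alternative decomposition $S = T + R'$, where $T := a_1 Z_1^2 + a_2 Z_2^2$ and $R' := \sum_{i \ge 3} a_i Z_i^2 \ge 0$ is independent of $(Z_1, Z_2)$, the density of $T$ can be computed by convolution and bounded uniformly via the Beta-function identity $\int_0^v [w(v-w)]^{-1/2} dw = \pi$:
\[
f_T(v) \;=\; \frac{1}{2\pi\sqrt{a_1 a_2}} \int_0^v \frac{e^{-(v-w)/(2a_1) - w/(2a_2)}}{\sqrt{w(v-w)}}\, dw \;\le\; \frac{1}{2\sqrt{a_1 a_2}}.
\]
Since convolution with the nonnegative density of $R'$ does not increase the supremum, one also has $\|f_S\|_\infty \le (2\sqrt{a_1 a_2})^{-1}$, whence $\sup_t \P(t \le S \le t+h) \le h/(2\sqrt{a_1 a_2})$. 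The final step takes the minimum of this with $\sqrt{2h/(\pi a_1)}$ and checks that, for $h$ in the nontrivial range $h \le \pi/4$, at least one of the two bounds falls below $\sqrt{4h/\pi}$.

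The main technical obstacle is the regime where both $a_1 < 1/2$ and $a_2$ is small: neither bound is individually tight there, and one must either extend the convolution calculation to three or more leading terms (so as to exploit the mass that $\sum_{i \ge 2} a_i^2 > 3/4$ forces into the tail of the coefficient sequence) or invoke the constraint $\sum a_i^2 = 1$ to rule out simultaneous smallness of $a_1, a_2$ and the concentration quantile. Once this last case is controlled, optimizing over the two displayed bounds produces the universal constant $\sqrt{4/\pi}$ in front of $h^{1/2}$.
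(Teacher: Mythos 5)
Your treatment of the case $a_1\ge 1/2$ is correct and is exactly the paper's argument: condition on the remaining coordinates, use that the density of $a_1\eta_1$ is decreasing on $(0,\infty)$, and integrate the $\chi^2_1$ density near the origin to get $\sqrt{2h/(\pi a_1)}\le\sqrt{4h/\pi}$. The two-term convolution bound $\|f_T\|_\infty\le (2\sqrt{a_1a_2})^{-1}$ via $\int_0^v[w(v-w)]^{-1/2}\,dw=\pi$ is also correct as stated.

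However, the case $a_1<1/2$ contains a genuine gap, which you yourself flag but do not close, and neither of your two proposed remedies works. The constraint $\sum_i a_i^2=1$ does \emph{not} rule out simultaneous smallness of $a_1$ and $a_2$: take $a_1=0.4$ and spread the remaining mass $0.84$ uniformly over $p-1$ coordinates, so that $a_2\to 0$ as $p\to\infty$; then $h/(2\sqrt{a_1a_2})$ blows up while $\sqrt{2h/(\pi a_1)}=\sqrt{(4h/\pi)\cdot(1/(2a_1))}$ strictly exceeds the target $\sqrt{4h/\pi}$. Extending the convolution to any \emph{fixed} number $k$ of leading terms fails for the same reason, since all of $a_1,\dots,a_k$ can be made small together. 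The anti-concentration in this regime comes from the collective effect of the many small coordinates, so one needs an argument that uses all of them at once. This is what the paper does: it bounds the modulus of the characteristic function, $|\phi_V(s)|=\prod_j(1+4a_j^2s^2)^{-1/4}\le(1+4s^2+8b_4s^4+\tfrac{32}{3}b_6s^6)^{-1/4}$, where $b_4=\sum_{j\ne k}a_j^2a_k^2$ and $b_6=\sum^{\star}_{j,k,l}a_j^2a_k^2a_l^2$ are elementary symmetric functions satisfying $b_4\ge 3/4$ and $b_6\ge 1/4$ precisely because $a_1\le 1/2$ and $\sum_j a_j^2=1$; the integrand then decays like $|s|^{-3/2}$, Fourier inversion gives $\|f_V\|_\infty<1$, hence $\P(t\le V\le t+h)\le h\le h^{1/2}\sqrt{4/\pi}$ for $h\le 1$ (the case $h\ge 1$ being trivial). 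To complete your proof you would need to replace the two-term convolution step by such a whole-vector bound.
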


\begin{proof}[Proof of Lemma \ref{lem:density}]
Write $V = \sum_{i=1}^p a_i \eta_i$. Assume $a_1 \le 1/2$. Then its characteristic function $\phi_V(s) = \E \exp(\sqrt{-1} s V), s \in \mathbb{R}$, satisfies
\begin{eqnarray}
\label{eq:A300350p}
|\phi_V(s)| &=& \left| \prod_{j=1}^p (1-2 \sqrt{-1} a_{j} s)^{-1/2} \right| \cr
 &=& \prod_{j=1}^p(1+4a_{j}^2s^2)^{-1/4} \cr
 &\le& (1 + 4 s^2 + 8 b_4 s^4 + 32/3 b_6 s^6)^{-1/4},
\end{eqnarray}
where $b_4 = \sum_{j\ne k} a_j^2 a_k^2 = 1 - \sum_{k=1}^p a_k^4 \ge 1-a_1^2 \ge 3/4$ and
\begin{eqnarray*}
b_6 &=& \sumstar_{j,k,l} a_j^2 a_k^2 a_l^2 = 1-3\sum_{j\ne k} a_j^4a_k^2 - \sumj a_j^6 \cr
 &\ge& 1-3\sumj a_j^4 \left(\sum_{k\ne j}a_k^2+a_j^2\right) \ge 1 - 3 a_1^2 \ge 1/4.
\end{eqnarray*}
By the inversion formula and (\ref{eq:A300350p}), the density function $f_V(\cdot)$ of $V$ satisfies
\begin{eqnarray*}
f_V(v) = {1\over {2 \pi}} \int_{-\infty}^\infty e^{-\sqrt{-1} v s} \phi_V(s) d s
 \le  {1\over {2\pi}}  \int_{-\infty}^\infty  |\phi_V(s)| d s < 1
\end{eqnarray*}
Now we shall deal with the case that $a_1 > 1/2$. Note that for all $w > 0$, $\sup_u \P(u \le \eta_1 \le u+w) \le w^{1/2} \sqrt{2 / \pi}$. Then $\sup_t \P\left(t\le V \le t+h\right) \le (2 h)^{1/2} \sqrt{2/\pi}$. Combining with the case $a_1 \le 1/2$, we obtain the upper bound $\max(h^{1/2} \sqrt{4/\pi}, \, h)$. Note that (\ref{holder}) trivially holds if $h \ge 1$. 
\end{proof}

\begin{proof}[Proof of Proposition \ref{prop:M020905a}]
Note that $\rho(\Sigma / f) \le |\Sigma / f|_F = 1$. Since $\tilde f / f - 1 = o_\P(1)$, $\rho(\Sigma/f) (f/\tilde f - 1) = o_\P(1)$. Hence for the "if" part,
\begin{eqnarray*}
\rho({\tilde \Sigma / \tilde f - \Sigma/f}) 
 \le \rho({\tilde \Sigma - \Sigma}) / \tilde f + \rho(\Sigma/f) |f/\tilde f - 1| = o_\P(1)
\end{eqnarray*}
The "only if" part can be similarly proved. 
\end{proof}
\begin{proof}[Proof of Lemma \ref{lem:com_mix_chisq}]
Let $\rho_p = \max_j\rBR{a_{p,j}-b_{p,j}}$. Choose an integer sequence $K = K_p$ such that $K_p \to \infty$ and $K_p \rho_p \to 0$.  Let $W = \sum_{j=1}^{K-1} a_{p,j} \eta'_j$, $W^\circ = \sum_{j=K}^p a_{p,j}\eta'_j$,  $S = \sum_{j=1}^{K-1} b_{p,j} \eta'_j$, $S^\circ = \sum_{j=K}^p b_{p,j} \eta'_j$, $w = 2 \sum_{j=K}^p a_{p, j}^2$ and $s = 2 \sum_{j=K}^p b_{p, j}^2$.  Let $u_K = a_{p, K}^{1/4}$. By the Gaussian approximation result in \cite{MR2302850}, on a richer probability space, we can construct a random variable $Z \sim N(0, 1)$, independent of $(\eta_i)_{i=1}^{K-1}$, such that 
\begin{eqnarray}
\P( |W^\circ - w^{1/2} Z| \ge u_K)  \le {c_4 \over u_K^4} \sum_{j=K}^p a_{p,j}^4
 \le {c_4 \over u_K^4} a_{p, K}^2 = c_4 u_K^4,
\end{eqnarray}
where $c_4 > 0$ is an absolute constant. Since $u_K \to 0$, by Lemma \ref{lem:density}, 
\begin{eqnarray}\label{eq:M20349p}
\sup_x| \P( |W + W^\circ| \le x) - \P( |W + w^{1/2} Z| \le x)| \to 0.
\end{eqnarray}
Similarly, for $v_K = b_{p, K}^{1/4}$, we can also construct a probability space with a r.v. $Z^* \sim N(0, 1)$ such that $\P( |S^\circ - w^{1/2} Z^*| \ge v_K)  \le c_4 v_K^4$, and 
\begin{eqnarray}\label{eq:M20350p}
\sup_x| \P( |S + S^\circ| \le x) - \P( |S + s^{1/2} Z^*| \le x)| \to 0.
\end{eqnarray}
Let $T = (W + w^{1/2} Z) - (S + s^{1/2} Z)$. Since $w-s = 2 \sum_{j=1}^{K-1} (b_{p,j}^2-a_{p,j}^2)$, 
\begin{eqnarray}
\E |T| &\le& 2 (K-1) \rho_p + |w^{1/2} - s^{1/2}| \cr
  &\le& 2 K \rho_p + |w-s|^{1/2} \le  2 K \rho_p + (4 K \rho_p)^{1/2} \to 0. 
\end{eqnarray}
Hence, by (\ref{eq:M20349p}), (\ref{eq:M20350p}) and Lemma \ref{lem:density}, (\ref{com_mix_chisq}) follows. 
\end{proof}
\begin{proof}[Proof of Theorem \ref{thm:spec_Sigma_over_f}]
(i) Since $X_i$ are i.i.d., we have
\begin{align*}
\E |\hat\Sigma-\Sigma|_F^2&=\E\sumjk (\hat\sigma_{jk}-\sigma_{jk})^2\\
&={1\over n}\sumjk\E\rbr{X_{1j}^2X_{1k}^2-\sigma_{jk}^2}\\
&={1\over n}\E\Big[{\big({\sumj X_{1j}^2}\big)^2}\Big]-{1\over n}f^2 \\
& = {1\over n} \E[(\vX_1^T\vX_1)^2] -{1\over n}f^2,
\end{align*}
which, by the assumption $\E[(\vX_1^T\vX_1)^2]=o\rbr{n f^2}$, implies $\E |\hat\Sigma-\Sigma|_F^2 = o(f^2)$. Then $\| |\Sigma|_F - |\hat \Sigma|_F \|_{2}\le \| |\hat\Sigma-\Sigma|_F \|_{2} = o(f)$, or $\| f - \hat f\|_{2} = o(f)$, and 
\begin{align*}
\| \hat\Sigma / \hat f - \Sigma / f\|_F 
 \le \|(\hat\Sigma-\Sigma)/ f \|_F
 + \| |\hat\Sigma/ \hat f|_F | 1-\hat f/f| \|= o(1).
\end{align*}

(ii) Let $g = \| \vX_{1}^T\vX_{1} \|$. Since $n f^2 = o(g^2)$, by Schwarz's inequality,
\begin{eqnarray}\label{high_moment3}
\E[(\vX_1^T\Sigma\vX_1)^2] &\le& \E(\vX_1^T\Sigma^2\vX_1\vX_1^T\vX_1)
 =  \E\tr[(\vX_1\vX_1^T)^2\Sigma^2] \cr
 &\le& \E[\sqrt{\tr(\Sigma^4)}(\vX_{1}^T\vX_{1})^2] 
 \le f^2 g^2 = o({g^4 \over n}).
\end{eqnarray}
Since (\ref{cond_1}) holds with $K_2=O(n^{3/4})$ and $\E(\vX_1^T \vX_1) = f_1 \le g$,  we have 
\begin{eqnarray}\label{high_moment4}
\RBR{\vX_{1}^T\vX_{1}}_4^4
 \le 8\RBR{\vX_{1}^T\vX_{1}-f_1}_4^4+8f_1^4 
 \le 8K_2^4 f^4+8f_1^4 
 = o(n g^4).
\end{eqnarray}
By (\ref{high_moment4}) and (\ref{high_moment3}), we have  
\begin{eqnarray}
\label{high_moment5}
\E[(\vX_1^T\vX_1)^2\vX_1^T\Sigma\vX_1] 
 \le \{ {\E[(\vX_1^T\vX_1)^4]} {\E[(\vX_1^T\Sigma\vX_1)^2]} \}^{1/2}
 = o(g^4).
\end{eqnarray}
Since $\E[(\vX_1^T\vX_1)^2(\vX_1^T\vX_2)^2] = \E[(\vX_1^T\vX_1)^2\vX_1^T\Sigma\vX_1]$, by (\ref{high_moment5}), we have 
\begin{eqnarray}
\label{high_moment6}
\E[\vX_1^T\vX_1(\vX_1^T\vX_2)^2\vX_2^T\vX_2]
 \le\E[(\vX_1^T\vX_1)^2(\vX_1^T\vX_2)^2] 
 = o(g^4)
\end{eqnarray}

Since $(\rho(\hat \Sigma) / \hat f)^4 \le {\hat f_4^4 / \hat f^4}\le (\rho(\hat \Sigma) / \hat f)^{2}$, it suffices to show that $\hat f_4^4/\hat f^4=o_\P(1)$. Clearly the latter follows from
\begin{eqnarray}\label{eq:M180327p}
\E(\hat f_4^4) = o(\E^2(\hat f^2)) \mbox{ and }
 \E( \hat f^2 / \E(\hat f^2) - 1)^2 = o(1).
\end{eqnarray}

An expansion of $\hat f^2 = \sum_{j, k \le p} \hat \sigma_{j k}^2$ yields that
\begin{align*}
n^2 \E (\hat f^2)&= {\sum_{1\le i\ne l\le n,1\le j,k\le p}\E (X_{ij}X_{ik}X_{lj}X_{lk})
   +\sum_{1\le i\le n,1\le j,k\le p}\E (X^2_{ij}X^2_{ik})} \\
&= {n(n-1)\sumjk\sigma_{jk}^2+n\sumjk\E (X^2_{ij}X^2_{ik})} \\
&= (n^2-n) f^2+ n \E[(\vX_1^T\vX_1)^2].
\end{align*}
Since $n f^2 = o(g^2)$, we have $\E (\hat f^2) \asymp n^{-1} g^2$. Write
\begin{align*}
n^4 \E(\tr(\hat\Sigma^4))
&= \sum_{1\le j,k,m,q\le p}\sum_{1\le i,l,s,t\le n}\E\rbr{X_{ij}X_{ik}X_{lk}X_{lm}X_{sm}X_{sq}X_{tq}X_{tj}}\\
&=: \I+\II+\III+\IV+\V+\VI+\VII,
\end{align*}
where, based on the number of distinct indexes in $\{i,l,s,t\}$, 
\begin{align*}
\I =& n(n-1)(n-2)(n-3)\sum_{1\le j,k,m,q\le p}\sigma_{jk}\sigma_{km}\sigma_{mq}\sigma_{qj}\\
\II =&4n(n-1)(n-2)\sum_{1\le j,k,m,q\le p}\E\rbr{X_{1j}X_{1k}^2X_{1m}}\sigma_{mq}\sigma_{qj}\\
\III =&2n(n-1)(n-2)\sum_{1\le j,k,m,q\le p}\E\rbr{X_{1j}X_{1k}X_{1m}X_{1q}}\sigma_{km}\sigma_{qj}\\
\IV =&2n(n-1)\sum_{1\le j,k,m,q\le p}\E\rbr{X_{1j}X_{1k}^2X_{1m}}\E\rbr{X_{1m}X_{1q}^2X_{1j}}\\
\V =&n(n-1)\sum_{1\le j,k,m,q\le p}\rbR{\E\rbr{X_{1j}X_{1k}X_{1m}X_{1q}}}^2\\
\VI =&4n(n-1)\sum_{1\le j,k,m,q\le p}\sigma_{jk}\E\rbr{X_{1j}X_{1k}X_{1m}^2X_{1q}^2}\\
\VII =&n\sum_{1\le j,k,m,q\le p}\E\rbr{X_{1j}^2X_{1k}^2X_{1m}^2X_{1q}^2}.
\end{align*}
Note that $\tr(\Sigma^k/f^k)\le \rho(\Sigma/f)^{k-2}=o(1)$ for $k>2$. By (\ref{high_moment2}) and (\ref{high_moment3})--(\ref{high_moment6}), we obtain by elementary manipulations that $\E(\hat f_4^4) = o(\E^2(\hat f^2))$. To prove the second assertion of (\ref{eq:M180327p}), we similarly write
\begin{align*}
n^4 \E\hat f^4&= \sum_{1\le j,k,m,q\le p}\sum_{1\le i,l,s,t\le n}\E\rbr{X_{ij}X_{ik}X_{lj}X_{lk}X_{sm}X_{sq}X_{tm}X_{tq}}\\
&= \I'+\II'+\III'+\IV'+\V'+\VI'+\VII',
\end{align*}
where
\begin{align*}
\I':=& n(n-1)(n-2)(n-3)\sum_{1\le j,k,m,q\le p}\sigma_{jk}^2\sigma_{mq}^2\\
\II':=&2n(n-1)(n-2)\sum_{1\le j,k,m,q\le p}\E\rbr{X_{1j}^2X_{1k}^2}\sigma_{mq}^2\\
\III':=&4n(n-1)(n-2)\sum_{1\le j,k,m,q\le p}\E\rbr{X_{1j}X_{1k}X_{1m}X_{1q}}\sigma_{jk}\sigma_{mq}\\
\IV':=&4n(n-1)\sum_{1\le j,k,m,q\le p}\sigma_{jk}\E\rbr{X_{1j}X_{1k}X_{1m}^2X_{1q}^2}\\
\V':=&n(n-1)\sum_{1\le j,k,m,q\le p}\E\rbr{X_{1j}^2X_{1k}^2}\E\rbr{X_{1m}^2X_{1q}^2}\\
\VI':=&2n(n-1)\sum_{1\le j,k,m,q\le p}\rbR{\E\rbr{X_{1j}X_{1k}X_{1m}X_{1q}}}^2\\
\VII':=&n\sum_{1\le j,k,m,q\le p}\E\rbr{X_{1j}^2X_{1k}^2X_{1m}^2X_{1q}^2}.
\end{align*}
Then the second assertion of (\ref{eq:M180327p}) similarly follows from (\ref{high_moment2}), (\ref{high_moment3})--(\ref{high_moment6}).
\end{proof}


\begin{proof}[Proof of Theorem \ref{thm:verify_cond_quad}]
Write $\vW$ and $\vW^*$ for $W_1$ and $W_2$, respectively. Let $B=A^T A$ and $U =A\vxi$. Then $f_1 = \tr(B)$, $U^T \Sigma U = \vxi^T B^2 \vxi$ and $U^T U = \vxi^T B \vxi$. By the argument in (\ref{eq:M210757p}), we have $\RBR{U^T U - f_1}^2_q \le 4 q^2 \|\xi_1^2\|_{q}^2 f^2$,
\begin{eqnarray*}
\RBR{U^T \Sigma U - f^2}_q^{2} &\le& 
  4 q^2 \|\xi_1^2\|_q^2 \tr(B^4) \le 4 q^2 \|\xi_1^2\|_q^2 f^4, \cr
\RBR{(U^T U - f_1)^2}_q &=& \RBR{U^T U - f_1}_{2q}^2  \le 4 (2q)^2 \|\xi_1^2\|_{2q}^2 f^2
\end{eqnarray*}
By the identity in (\ref{eq:M210755p}), note that $(U^T U)^2 = (U^T U - f_1)^2 + 2 f_1  (U^T U - f_1) + f_1^2$, we obtain (\ref{numerator_cond_1}) with $\bar C_\delta = 2(4 q  \|\xi_1^2\|_{2q})^{2 q}$.

Let $U^* = A \vzeta$, where $\vzeta$ and $\vxi$ are i.i.d. Then $U^T U^* = \vxi^T B \vzeta$. By (\ref{eq:M21827}), $\RBR{\vxi^T B \vzeta}_{2q}^{2} \le (2q-1)^{2}\RBR{\xi_{1}}_{2q}^{4}f^{2}$, which similarly implies (\ref{numerator_cond_2}) with $\bar D_\delta = (4 q)^q \RBR{\xi_{1}}_{2q}^{2q} +(2q-1)^{2q}  \RBR{\xi_{1}}_{2q}^{4q}$ in view of the second identity in (\ref{eq:M210755p}). 
\end{proof}

\begin{proof}[Proof of Lemma \ref{lem:denominator}]
Let $\cB=\rbr{(i,j),1\le i\le j\le p}$ and $\vomg=\rbr{\omega_b}_{b\in\cB}\in\mathbb{R}^{p(p+1)/2}$, where $\omega_b=\xi_i\xi_j-\ind{i=j}$ for $b=(i,j)$, that is
\begin{align*}
\vomg=\rbr{\varrho_1,\xi_1\xi_2,\ldots,\xi_1\xi_p,\varrho_2,\xi_2\xi_3,\ldots,\varrho_p}^T, \mbox{ where }  \varrho_k=\xi_{k}^{2}-1.
\end{align*}
Let $V_{W}$ be the covariance matrix of $\vomg$. Then $V_{W}=\mathrm{diag}\rbr{\rBr{v_{b,b}}_{b\in\cB}}$, where for $b=\rbr{i,l}$, $v_{b,b} = \Var\rbr{\xi_i^2}$ if $l=i$ and $v_{b,b} = 1$ if $l \not= i$. Also define $G=\rbr{g_{a,b}}_{a\in \cI, b\in\cB} \in \mathbb{R}^{p^2\times\rbR{p(p+1)/2}}$, where for $a=\rbr{j,k}$, $b=\rbr{i,l}$,
\begin{align*}
g_{a,b}=\begin{cases}
a_{ji}a_{ki},&\mbox{ if }l=i;\\
a_{ji}a_{kl}+a_{jl}a_{ki},&\mbox{ if }i<l.
\end{cases}
\end{align*}
Note that $X_jX_k=\vg_a^T\vomg,$ where $\vg_a^T$ is the $a$'th row of $G$. Then $\vW=G\vomg$ and
\begin{align*}
\E\rbr{\vW\vW^T}=\rbr{\gamma_{a,a'}}_{a,a'\in\cI},
\end{align*}
where for $a=(j,k)$, $a'=(m,q)$,
\begin{eqnarray}\label{eq:M220636p}
\gamma_{a,a'} &=& \Cov\rbr{X_jX_k,X_mX_q}=\vg_a^TV_{W}\vg_{a'} \cr
 &=& \nu\sum_{i}a_{ji}a_{ki}a_{mi}a_{qi}+\sum_{i< l}\rbr{a_{ji}a_{kl}+a_{jl}a_{ki}}\rbr{a_{qi}a_{ml}+a_{mi}a_{ql}} \cr
 &=& (\nu-2)\sum_{i}a_{ji}a_{ki}a_{mi}a_{qi}+\sigma_{jm}\sigma_{kq}+\sigma_{jq}\sigma_{km}.
\end{eqnarray}
Let $B=A^TA=\rbr{b_{il}}_{i,l}$,  $L_0=2f^{4}+2f_{4}^{4}$, $L_1=\sum_{il}b_{il}^4$ and $L_2=\sum_i\rbr{\sum_l b_{il}^2}^2$. By (\ref{eq:M220636p}), 
\begin{align*}
f_\vW^2&=\sum_{a,a'\in\cI}\gamma_{a,a'}^2\\
&=\sum_{1\le j,k,m,q\le p}\big[{(\nu-2)\sum_{i}a_{ji}a_{ki}a_{mi}a_{qi}+\sigma_{jm}\sigma_{kq}+\sigma_{jq}\sigma_{km}}\big]^2\\
&= L_1(\nu-2)^2 + 4 L_2 (\nu-2) + L_0.
\end{align*}
Clearly $f_\vW^2\ge 2f^4$ if $\nu\ge 2$. Note that $4L_1-8L_2+L_0\ge 0$. Since $L_1\le L_2$, $L_0\ge 8L_2-4L_1\ge 4L_1$. If $0<\nu<2$, then the quantity
\[
f_\vW^2-\frac{L_0\nu^2}{4}=\rbr{L_1-\frac{L_0}{4}}\nu^2+4(L_2-L_1)\nu+L_0+4L_1-8L_2
\]
is larger than the minimum of its value at $\nu=0$ and $\nu=2$, which are both nonnegative. Therefore, $f_\vW^{2}\ge \nu^{2} f^{4}/2$ for any $\nu \in (0, 2)$. 
\end{proof}

\medskip


\bibliographystyle{plainnat}
\bibliography{l2asymp}

\end{document}